\newtheorem{Theorem}{Theorem}[section]
\newtheorem{Lemma}{Lemma}[section]
\newtheorem{example}{Example}[section]
\newtheorem{Corollary}{Corollary}[section]
\newproof{proof}{Proof}
\newproof{pot}{Proof of Theorem \ref{thm2}}
\def\r2n{\mathbb{R}^{2n}}
\def\r2n2n{\mathbb{R}^{2n\times 2n}}
\def\cnn{\mathbb{C}^{n\times n}}
\newcommand{\bb}{\begin{bmatrix}}
\newcommand{\eb}{\end{bmatrix}}
\begin{document}

\date{}
\begin{frontmatter}
\title{
The Eigenvalue Shift Technique and Its Eigenstructure Analysis of a Matrix
\tnoteref{t1}}

\author{Chun-Yueh Chiang\fnref{fn1}}
\ead{chiang@nfu.edu.tw}
\address{Center for General Education, National Formosa
University, Huwei 632, Taiwan.}
\author{Matthew M. Lin \corref{cor1}\fnref{fn2}}
\ead{mlin@math.ccu.edu.tw}
\address{Department of Mathematics, National Chung Cheng University, Chia-Yi 621, Taiwan.}
\cortext[cor1]{Corresponding author}
\fntext[fn1]{The first
author was supported  by the National Science Council of Taiwan under grant
NSC100-2115-M-150-001.}
\fntext[fn2]{The second
author was supported by the National Science Council of Taiwan under grant
NSC99-2115-M-194-010-MY2.}

\date{ }

\begin{abstract}

The eigenvalue shift technique is the most well-known and fundamental tool for matrix computations. Applications include the search of eigeninformation, the acceleration of numerical algorithms, the study of Google's PageRank. The shift strategy arises from the concept investigated by Brauer~\cite{Brauer52} for changing the value of an eigenvalue of a matrix to the desired one, while keeping the remaining eigenvalues and the original eigenvectors unchanged. The idea of shifting  distinct eigenvalues can easily be generalized by Brauer's idea. However, shifting an eigenvalue with multiple multiplicities is a challenge issue and worthy of our investigation. In this work,  we propose a new way for updating an eigenvalue with multiple multiplicities and thoroughly analyze its corresponding Jordan canonical form after the update procedure.

\end{abstract}

\begin{keyword}
eigenvalue shift technique,  Jordan canonical form, rank-k updates.
\end{keyword}

\end{frontmatter}

\section{Introduction}

The eigenvalue shift technique is a will-established method of mathematics in science and engineering. It arises from the research of  eigenvalue computations.
As we know,  the power method is the most common and easiest algorithm to find the dominant eigenvalue  for a given matrix $A\in\mathbb{C}^{n\times n}$, but it is impracticable to look for a specific eigenvalue of $A$.
In order to compute one specific eigenvalue, we need to apply the power method to the inverse of the shifted matrix $(A-\mu I_n)$ with an appropriately chosen shift value $\mu\in\mathbb{C}$.
Here, $I_n$ is an $n\times n$ identity matrix.
This is the well-known shifted inverse power method~\cite{Golub96}. Early applications of eigenvalue shift techniques are focused on the stability analysis of dynamical systems~\cite{Bellman97}, the computation of the root of a linear or non-linear equation by using preconditioning approaches~\cite{Kelley95}. Only recently, the study of eigenvalue shift approaches has been widely applied to the study of inverse eigenvalue problems for reconstructing a structured model from prescribed spectral data~\cite{Chu02,Chu05,Chu09,Chu10}, the structure-preserving double algorithm for solving nonsymmetric algebraic matrix Riccati equation
~\cite{GuoBruMei2007,Bini2008}, and the Google's PageRank problem~\cite{Horn06, Wu08}.  Note that the common goal of the
applications of the shift techniques stated above is
to replace some unwanted eigenvalues so that the stability or acceleration of the prescribed algorithms can be obtained.  Hence, a natural idea, the main contribution of our work, is to propose a method for updating the eigenvalue of a given matrix and provide the precise Jordan structures of this updated matrix.

For a matrix $A\in\mathbb{C}^{n\times n}$ and a number $\mu$, if $(\lambda,v)$ is an eigenpair of $A$, then matrices $A+\mu I_n$ and $\mu A$ have the same eigenvector $v$ corresponding to the eigenvalue $\lambda+\mu$ and $\mu \lambda$, respectively. However, these two processes are to translate or scale \emph{all} eigenvalues of A.
Our
work here is to change a particular eigenvalue, which is  enlightened through the following important result given by Brauer~\cite{Brauer52}.
%
%
\begin{Theorem}{\bf (Brauer).}\label{lem:Brauer}
Let $A$ be a  matrix with $Av = \lambda_0 v$ for some nonzero vector $v$. If $r$ is a vector so that $r^{\top}v =1$, then for any scalar $\lambda_1$, the eigenvalues of the matrix
\begin{equation*}
\widehat{A} = A+ (\lambda_1-\lambda_0) v r^{\top},
\end{equation*}
consist of those of $A$, except that one eigenvalue $\lambda_0$ of $A$ is replaced by $\lambda_1$. Moreover, the eigenvector $v$ is unchanged, that is, $\widehat{A} v= \lambda_1 v$.
\end{Theorem}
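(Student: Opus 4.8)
The plan is to dispatch the two assertions separately: the eigenvector claim is an immediate substitution, and the spectrum claim follows by block‑triangularizing $A$ and $\widehat A$ simultaneously under a single similarity, or equivalently by a one‑line determinant computation.

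First, the eigenvector part. Substituting the definition of $\widehat A$ and using the normalization $r^{\top}v=1$,
\[
\widehat A v = Av + (\lambda_1-\lambda_0)\,v\,(r^{\top}v) = \lambda_0 v + (\lambda_1-\lambda_0)v = \lambda_1 v,
\]
which is the final assertion and also shows $\lambda_1$ is an eigenvalue of $\widehat A$.

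Second, for the eigenvalues I would exploit that $r^{\top}v=1\neq 0$ forces $v\notin\ker(r^{\top})$. Picking a matrix $W\in\mathbb{C}^{n\times(n-1)}$ whose columns form a basis of the hyperplane $\ker(r^{\top})$, the matrix $U=[\,v\mid W\,]$ is invertible; multiplying the row identity $r^{\top}U=[\,1\mid 0\,]$ on the right by $U^{-1}$ shows that $r^{\top}$ is exactly the first row of $U^{-1}$, so $U^{-1}v=e_1$ and $U^{-1}(vr^{\top})U = e_1(r^{\top}U)=e_1e_1^{\top}$. Since $Av=\lambda_0 v$, the matrix $U^{-1}AU$ is block upper triangular with leading entry $\lambda_0$ and trailing block $B\in\mathbb{C}^{(n-1)\times(n-1)}$, so the eigenvalues of $A$ are $\lambda_0$ together with those of $B$ (with multiplicity). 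The rank‑one update alters only the $(1,1)$ entry, so $U^{-1}\widehat AU$ is block upper triangular with leading entry $\lambda_1$ and the same block $B$; hence the eigenvalues of $\widehat A$ are $\lambda_1$ together with those of $B$, and comparing the two lists proves the theorem. Alternatively, one computes the characteristic polynomial directly by the matrix determinant lemma: for $\lambda\notin\mathrm{spec}(A)$,
\[
\det(\lambda I_n-\widehat A)=\det(\lambda I_n-A)\bigl(1-(\lambda_1-\lambda_0)\,r^{\top}(\lambda I_n-A)^{-1}v\bigr)=\det(\lambda I_n-A)\,\frac{\lambda-\lambda_1}{\lambda-\lambda_0},
\]
where $(\lambda I_n-A)^{-1}v=(\lambda-\lambda_0)^{-1}v$; since $(\lambda-\lambda_0)$ divides $\det(\lambda I_n-A)$, this identity of rational functions is in fact an identity of polynomials, giving the claimed spectrum.

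I do not expect a serious obstacle here. The only points needing a little care are the bookkeeping that identifies $r^{\top}$ with the first row of $U^{-1}$ (respectively, the passage from the rational‑function identity to a polynomial identity), and it is precisely at this step that the normalization $r^{\top}v=1$, rather than merely $r^{\top}v\neq 0$, is what makes the argument clean.
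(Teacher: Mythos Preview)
Your argument is correct on both fronts. Note, though, that the paper does not actually supply a proof of Brauer's theorem; it is stated with attribution to~\cite{Brauer52}. The paper's own contribution is the rank-$k$ generalizations (Theorems~\ref{lem:shiftA} and~\ref{lem:shiftAodd}), and the proofs there follow precisely your \emph{second} route: apply the matrix determinant lemma to $\det(\widehat A-\lambda I)$, use that the resolvent acts on eigenvectors by scalar multiplication, and then pass from the rational identity to all $\lambda$ by a continuity/perturbation argument rather than by your divisibility observation. Your \emph{primary} route---completing $v$ by a basis of $\ker r^{\top}$ to obtain a similarity that simultaneously block-triangularizes $A$ and $\widehat A$---is more elementary and entirely self-contained (no resolvents, no limiting step), but it does not scale as transparently to the rank-$k$ updates that are the paper's real target; the determinant approach, by contrast, extends directly once the generalized biorthogonality relations of Section~2 are in hand.
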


To demonstrate our motivation,  consider an $n\times n$ complex matrix $A$.
%
Let   $ A=V J V^{-1}$ be a Jordan matrix  decomposition of $A$, where $J$ is the Jordan normal form of $A$ and  V is a matrix consisting of generalized eigenvectors of $A$. Denote the matrix $J$ as
\begin{equation*}
J= \left[\begin{array}{cc}J_{k}(\lambda) & 0 \\0 & J_{2}\end{array}\right],
\end{equation*}
where
$J_{k}(\lambda)$ is a $k\times k$  Jordan block corresponding to eigenvalue $\lambda$ given by
\begin{equation*}
J_{k}(\lambda) := \left[\begin{array}{ccccc}
       \lambda & 1 & 0 & \cdots & 0 \\ 0 & \lambda & \ddots & \ddots & \vdots \\
       \vdots & \ddots & \ddots & \ddots & 0 \\ \vdots & & \ddots & \lambda & 1 \\
       0 & \cdots & \cdots & 0 & \lambda
    \end{array}\right]\in\mathbb{C}^{k\times k}
\end{equation*}
and  $J_{2}$ is an $(n-k)\times(n-k)$ matrix composed of
 a finite direct sum of Jordan blocks.
Then, partition matrices $V$ and $V^{-1}$ conformally as
\begin{equation*}
V= \left[\begin{array}{cc}V_1 & V_2\end{array}\right], \,
(V^{-1})^*= \left[\begin{array}{cc}V_3 & V_4\end{array}\right],
\end{equation*}
with $V_1,V_3\in\mathbb{C}^{n\times k}$ and $V_2, V_4\in\mathbb{C}^{n\times (n-k)}$.
A natural idea of updating the eigenvalue appearing at the top left corner of the matrix $J$, but keeping $V$ and $V^{-1}$ unchanged, is to add a rank-$k$ matrix
 $\Delta A=V_1 \Delta J_{1} V_3^*$
 to the original matrix $A$ so that
  \begin{equation*}
 A+\Delta A=V
 \left[\begin{array}{cc} J_{k}(\lambda)+\Delta J_{1} &  0 \\ 0&  J_{2}\end{array}\right] V^{-1}
 \end{equation*}
 has the desired eigenvalue.
 Here,  $\Delta J_{1}$ is a particular $k\times k$ matrix, which makes  $J_{k}(\lambda)+\Delta J_{1}$ a new Jordan matrix.
Observe that the updated matrix $A+\Delta A$ preserves the structures of matrices $V$ and $V^{-1}$, but changes the unwanted eigenvalue via the new Jordan matrix  $J_{k}(\lambda)+\Delta J_{1}$.   In other words, if we know the Jordan matrix decomposition in prior,  we can update the eigenvalue of $A$ without no difficulty.
Note that  $V_3^*$ and $V_1$ are matrices composed of the generalized left and right eigenvectors, respectively, and $V_3^*V_1 = I_k$.    In practice, given any generalized left and right eigenvectors corresponding to $J_k(\lambda)$, the condition $V_3^*V_1 = I_k$ is not true in general. Hence, the eigenvalue shift approach stated above cannot not be applied.

In this paper, we want to investigate an eigenvalue shift technique for updating the eigenvalue of the matrix $A$, provided that  partial generalized left and right eigenvectors are given. We show that after the shift technique the first half generalized eigenvectors are kept the same. Indeed, the study of the invariant of the first half generalized eigenvectors is essential for finding the stabilizing solution of an algebraic Riccati Equation and is the so-called \emph{Schur method}  or \emph{invariant subspace method}~\cite{Laub1979, Guo2006}.
To advance our research we organize this paper as follows. In Section 2,  several useful features of  the generalized  left and right eigenvectors are discussed. In particular, we investigate the principle of generalized biorthogonality of generalized eigenvectors. This principle is then applied to the study
of the eigenvalue shift technique in Section 3 and 4. Finally, the conclusions and some open problems are given in Section 5.

\section{The Principle of Generalized Biorthogonality
}
%
For a given $n\times n$ square matrix $A$, let $\sigma(A)$ be the set of all eigenvalues of $A$. We say that two vectors $u$ and $v$ in $\mathbb{C}^{n}$ are orthogonal if $u^*v = 0$. In our study, we are seeking the orthogonality of eigenvectors of a given matrix.
The feature is know as the \emph{principle of biorthogonality}
and is discussed in~\cite[Theorem~1.4.7]{HornJohnson90} as follows:

\begin{Theorem}\label{thmbio0}
If $A\in\mathbb{C}^{n\times n}$ and if $\lambda_1,\lambda_2\in\sigma(A)$, with $\lambda_1\neq \lambda_2$, then any left eigenvector of $A$ corresponding to $\lambda_1$ is orthogonal to
any right eigenvector of $A$ corresponding to $\lambda_2$.
\end{Theorem}

Theorem~\ref{thmbio0} tells us the principle of biorthogonality
with respect to any two distinct eigenvalues. Next, we want to  enhance this feature  to generalized left and right eigenvectors.



%
%
\begin{Theorem}\label{thm:bio}
[Generalized Biorthogonality Property] 
Let $A\in\cnn$ and let $\lambda_1, \lambda_2\in\sigma(A)$. Suppose  $\{u_i\}_{i=1}^{p}$
and $\{v_i\}_{i=1}^{q}$  are the generalized left and right eigenvectors corresponding to the Jordan block $J_{p}(\lambda_1)$ and $J_{q}(\lambda_2)$, respectively. Then
 \begin{itemize}
 \item[(a)]
 If $\lambda_1\neq \lambda_2$,
 \begin{equation*}
u_{i}^* v_{j}=0, \quad 1 \leq i\leq p, \,  1\leq j \leq q.
\end{equation*}
%
%
 \item[(b)]
 If $\lambda_1 = \lambda_2$,
 \begin{subequations}
\begin{eqnarray}\label{eq:biothogonal}
u_{i}^* v_{j}&=& u_{i-1}^* v_{j+1}, \quad 2 \leq i\leq p, \,  1\leq j \leq q-1,\\
u_{i}^* v_{j}&=&0,   \quad 2 \leq i+j\leq \max\{p,q\} .\end{eqnarray}
\end{subequations}
\end{itemize}
\end{Theorem}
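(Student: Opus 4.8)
The plan is to reduce the whole statement to a single recurrence obtained by pairing the two Jordan chains through $A$. First I fix conventions: order the chains so that $u_1$ and $v_1$ are the genuine left and right eigenvectors, so that, with $u_0=v_0=0$,
\[
(A-\lambda_2 I_n)v_j = v_{j-1}\ \ (1\le j\le q),\qquad u_i^*(A-\lambda_1 I_n)=u_{i-1}^*\ \ (1\le i\le p).
\]
This is exactly the indexing in which a left chain is the corresponding block of rows of $V^{-1}$ read in reverse order when $A=VJV^{-1}$. Computing $u_i^*(A-\lambda_2 I_n)v_j$ in two ways — substituting $(A-\lambda_2 I_n)v_j=v_{j-1}$ on the right, and $u_i^*(A-\lambda_2 I_n)=(\lambda_1-\lambda_2)u_i^*+u_{i-1}^*$ on the left — yields the basic identity
\[
(\lambda_1-\lambda_2)\,u_i^* v_j \;=\; u_i^* v_{j-1}-u_{i-1}^* v_j,\qquad 1\le i\le p,\ 1\le j\le q .
\]
Equivalently, assembling the chains into $U=[u_1,\dots,u_p]$ and $V=[v_1,\dots,v_q]$, the matrix $U^*V$ satisfies a $p\times q$ Sylvester equation $T(U^*V)=(U^*V)J_q(\lambda_2)$ whose coefficient matrices have spectra $\{\lambda_1\}$ and $\{\lambda_2\}$.

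For part (a), $\lambda_1\ne\lambda_2$, the identity gives $u_i^* v_j=(u_i^* v_{j-1}-u_{i-1}^* v_j)/(\lambda_1-\lambda_2)$, and a strong induction on $i+j$ — with base case $i=j=1$, where the right-hand side vanishes because $u_0=v_0=0$ — forces $u_i^* v_j=0$ for every admissible $i,j$. From the Sylvester viewpoint this is just the fact that the equation has only the trivial solution when the two spectra are disjoint, which also recovers Theorem~\ref{thmbio0} as the scalar case $p=q=1$.

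For part (b), $\lambda_1=\lambda_2=:\lambda$, the identity collapses to $u_i^* v_{j-1}=u_{i-1}^* v_j$; re-indexing $j\mapsto j+1$ gives the first asserted equality $u_i^* v_j=u_{i-1}^* v_{j+1}$ on the stated range and shows that $u_i^* v_j$ depends only on the anti-diagonal index $i+j$, say $u_i^* v_j=c_{i+j}$. It remains to show $c_s=0$ for $2\le s\le\max\{p,q\}$, and here the genuine eigenvectors serve as boundary data: from $u_1^*(A-\lambda I_n)=0$ and $v_\ell=(A-\lambda I_n)v_{\ell+1}$ we get $u_1^* v_\ell=0$ for $0\le \ell\le q-1$, i.e.\ $c_s=0$ for $2\le s\le q$; symmetrically, from $(A-\lambda I_n)v_1=0$ and $u_\ell^*=u_{\ell+1}^*(A-\lambda I_n)$ we get $u_\ell^* v_1=0$ for $0\le \ell\le p-1$, i.e.\ $c_s=0$ for $2\le s\le p$. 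Together these give $c_s=0$ for $2\le s\le\max\{p,q\}$, which is the second assertion.

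The only delicate point is bookkeeping: one must orient the left chain so that its indexing matches the statement, and keep track of the degenerate indices $i=1$, $j=1$ (and the cases $p=1$ or $q=1$), where the conventions $u_0=v_0=0$ do the work both in the induction and in the boundary computations. Once the basic identity is in hand, both parts are short, and the recurrence handles (a) and (b) uniformly and self-containedly.
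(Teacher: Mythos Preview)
Your argument is correct and is essentially the paper's own proof: both derive the single recurrence $(\lambda_1-\lambda_2)\,u_i^*v_j = u_i^*v_{j-1}-u_{i-1}^*v_j$ (the paper via the matrix identity $J_p^\top(\lambda_1)(U^*V)=(U^*V)J_q(\lambda_2)$, you by computing $u_i^*(A-\lambda_2 I)v_j$ two ways) and then run the same induction on $i+j$ for (a) and the same anti-diagonal constancy plus boundary data $x_{0,j}=x_{i,0}=0$ for (b). Your write-up is in fact a bit more explicit than the paper's about the boundary step in (b) and about the Sylvester-equation interpretation, but the route is the same.
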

\begin{proof}
Let $U$ and $V$ be two matrices defined by
\begin{equation*}
U = [\begin{array}{ccc} u_1 & \ldots & u_p\end{array} ] \in \mathbb{C}^{n\times p}, \,
V =  [\begin{array}{ccc} v_1 & \ldots & v_q\end{array} ] \in \mathbb{C}^{n\times q}.
\end{equation*}
By the definitions of $\{u_i\}_{i=1}^{p}$
and $\{v_i\}_{i=1}^{q}$,  we have
\begin{equation*}
U^* A= J_{p}^\top({\lambda}_1) U^* ,\,
AV   =  VJ_{q}(\lambda_2).
\end{equation*}
That is,
\begin{equation}\label{eq:biouv}
U^* A V=(U^* V)J_{q}(\lambda_2)=J_{p}^\top({\lambda}_1) (U^* V).
\end{equation}
Define components $x_{i,j} = u^*_i v_j$, $x_{i,0} = 0$,  and $x_{0,j} = 0$, for $i = 1,\ldots, p$ and $j = 1,\ldots, q$. Then~\eqref{eq:biouv} implies that
\begin{equation}\label{eq:x}
(\lambda_1-\lambda_2)x_{i,j}= x_{i-1,j} -x_{i,j-1},\quad 1\leq i \leq p,\,1\leq j\leq q.
\end{equation}
It follows from~\eqref{eq:x} and the assumptions of $x_{i,j}$  that if  $\lambda_1\neq \lambda_2$, then
$x_{i,j} = 0$ for $1\leq i\leq p$ and $ 1 \leq j \leq q $. This proves (a).

By~\eqref{eq:x}, we see that
if  $\lambda_1=\lambda_2$,
$x_{i-1,j}=x_{i,j-1}$ for $1\leq i\leq p$ and $1\leq j\leq q$.
It follows that
$x_{i,j} = 0$, for $2 \leq i+j \leq \max\{p,q\}$ and
elements $x_{i,j}$ coincide on the "matrix diagonals"
$i+j = s$ for any $\max\{p,q\} + 1 \leq s \leq p+q$. This completes the proof of (b). \hfill $\Box$
\end{proof}
Note that the values  $x_{i,j}$ defined in the proof of Theorem~\ref{thm:bio} imply that if $\lambda_1=\lambda_2$,  the matrix  $X = [x_{i,j}]_{p \times q}$ is indeed a \emph{lower triangular Hankel matrix}. Also, by~\eqref{eq:biothogonal},
we have the following useful results.
\begin{Corollary}\label{Cor:bio}
Let $A\in\cnn$ and let $\lambda\in\sigma(A)$. Suppose  $\{u_i\}_{i=1}^{p}$
and $\{v_i\}_{i=1}^{q}$  are the generalized left and right eigenvectors corresponding to the Jordan block $J_{p}(\lambda)$ and $J_{q}(\lambda)$, respectively. Then,
if $p$ and $q$ are even, then
\begin{equation}\label{eq:bioeven}
u_{i}^* v_{j}=0, \quad 1 \leq i\leq \frac{p}{2}, \,  1\leq j\leq \frac{q}{2};
\end{equation}
if $p$ and $q$ are odd, then
\begin{equation}\label{eq:bioodd}
u_{i}^* v_{j}=0, \, u^*_{\frac{p+1}{2}} v_j = 0, \, u^*_{i} v_{\frac{q+1}{2}} = 0, \quad 1\leq i\leq \frac{p-1}{2}, \, 1 \leq j\leq \frac{q-1}{2}.
\end{equation}
\end{Corollary}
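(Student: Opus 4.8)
The plan is to obtain the corollary as a direct specialization of the second identity of Theorem~\ref{thm:bio}(b), which asserts $u_i^* v_j = 0$ whenever $2 \le i+j \le \max\{p,q\}$; equivalently, in the notation of that proof, the entries $x_{i,j} = u_i^* v_j$ of the lower triangular Hankel matrix $X=[x_{i,j}]_{p\times q}$ vanish on those anti-diagonals. Since both \eqref{eq:bioeven} and \eqref{eq:bioodd} are merely assertions that certain inner products $u_i^* v_j$ are zero, it suffices to check that each index pair $(i,j)$ listed there lies in the range $2 \le i+j \le \max\{p,q\}$. The lower bound is automatic because every such pair has $i\ge 1$ and $j\ge 1$, so the whole argument reduces to verifying the elementary inequality $i+j \le \max\{p,q\}$ case by case.

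For even $p,q$, any pair with $1\le i\le p/2$ and $1\le j\le q/2$ satisfies $i+j \le p/2+q/2 = (p+q)/2 \le \max\{p,q\}$, the last inequality being the triviality that an average of two numbers does not exceed their maximum; hence \eqref{eq:bioeven} follows from Theorem~\ref{thm:bio}(b). For odd $p,q$, the interior block $1\le i\le (p-1)/2$, $1\le j\le (q-1)/2$ is even more comfortable, since there $i+j\le (p+q)/2-1<\max\{p,q\}$. The two boundary families are the tight ones: for $i=(p+1)/2$ and $1\le j\le (q-1)/2$ one has $i+j\le (p+1)/2+(q-1)/2=(p+q)/2$, and symmetrically for $1\le i\le (p-1)/2$ with $j=(q+1)/2$; because $p$ and $q$ are odd, $(p+q)/2$ is an integer, and it is $\le\max\{p,q\}$. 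Feeding each of these pairs into Theorem~\ref{thm:bio}(b) gives \eqref{eq:bioodd}.

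The only thing requiring a moment's care is that the extremal pairs — $(p/2,q/2)$ when $p,q$ are even and $((p+1)/2,(q-1)/2)$, $((p-1)/2,(q+1)/2)$ when they are odd — still sit on an anti-diagonal with $i+j\le\max\{p,q\}$, with equality occurring exactly when $p=q$; beyond this index bookkeeping nothing new is computed. An essentially equivalent route is to read the statements off the shape of $X$ recorded in the remark after Theorem~\ref{thm:bio}: \eqref{eq:bioeven} says the leading $(p/2)\times(q/2)$ submatrix of $X$ is zero, while \eqref{eq:bioodd} says its leading $((p-1)/2)\times((q-1)/2)$ submatrix, together with the first $(q-1)/2$ entries of row $(p+1)/2$ and the first $(p-1)/2$ entries of column $(q+1)/2$, all vanish — and all of these positions lie on the anti-diagonals $i+j\le\max\{p,q\}$ on which a lower triangular Hankel matrix is zero.
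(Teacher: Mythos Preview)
Your proposal is correct and follows exactly the route the paper intends: the paper states the corollary as an immediate consequence of Theorem~\ref{thm:bio}(b) (the vanishing of $u_i^*v_j$ for $2\le i+j\le\max\{p,q\}$) without spelling out the index bookkeeping, and your argument supplies precisely that verification. Nothing further is needed.
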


Note that Corollary~\ref{Cor:bio} provides only the necessary conditions for two generalized left and right eigenvectors to be orthogonal. It is possible that two generalized eigenvectors are orthogonal, even if they are not fitted in the
constraints given in~\eqref{eq:bioeven} and~\eqref{eq:bioodd}.
This phenomenon can be observed by the following two examples. We first provide all possible types of generalized eigenvectors of a Jordan matrix with one and two Jordan blocks, respectively.  We then come up with two extreme cases for each Jordan matrix under discussion. One is with the smallest number of orthogonal generalized left and right eigenvectors. The other is with the largest number of orthogonal ones.

\begin{example}\label{ex1}
Let $A=J_{2k}(\lambda)$. Then all of the generalized left and right eigenvectors $\{u_i\}_{i=1}^{2k} \subset\mathbb{C}^{2k}$ and $\{v_i\}_{i=1}^{2k}\subset\mathbb{C}^{2k}$, respectively, can be written as
\begin{align*}
u_i=\sum\limits_{j=1}^i a_{i-j+1} e_{2k-j+1}, \,
v_i=\sum\limits_{j=1}^i b_{i-j+1} e_j,
\,1\leq i \leq 2k,
\end{align*}
where $a_i$ and $b_i$ are  arbitrary complex numbers and $a_1b_1\neq 0$. Moreover, if
$a_i=b_i=1$ for all $i$, then
\begin{align*}
&u_i^*v_j = 0 ,\,1\leq i,j \leq k,\\
&u_i^* v_j\neq 0 ,\,k+1\leq i,j \leq 2k,
\end{align*}
and if
$a_1=b_1=1$ and $a_i=b_i=0$ for all $i>1$, then
\begin{align*}
&u_i^*v_j  = 0,\,i+j\neq 2k+1,\\
&u_i^* v_j\neq 0 ,\,i+j= 2k+1.
\end{align*}
\end{example}

The second example demonstrates the orthogonal properties of the generalized eigenvectors between two Jordan blocks.

\begin{example}\label{ex2}
Let $A=J_{k}(\lambda)\oplus J_{k}(\lambda)$. Then all of the generalized left eigenvectors of the first Jordan block $J_k(\lambda)$ (the upper left corner) and  the right generalized eigenvectors of the second Jordan block $J_k(\lambda)$  (the lower right corner) can be written as
\begin{align*}
u_i=\sum\limits_{j=1}^i a_{j} e_{k-j+1}+b_{j} e_{2k-j+1},\,v_i=\sum\limits_{j=1}^i c_{i-j+1} e_j+d_{i-j+1} e_{k+j},\,1\leq i \leq k,
\end{align*} respectively, where $a_i,b_i,c_i$ and $d_i$ are  arbitrary complex numbers and $(|a_1|^2+|b_1|^2)(|c_1|^2+|d_1|^2)>0$. Moreover, if  $a_i=b_i=c_i=d_i=1$ for all $i$, then for all $1\leq i,j \leq k$
\begin{align*}
&u_i^* v_j = 0 ,\,i+j<k+1,\\
&u_i^* v_j\neq 0 ,\,i+j \geq k+1,
\end{align*}
and if $a_i=d_i=1,b_i=c_i=0$ for all $i$, then
\begin{align*}
&u_i^*v_j = 0 ,\,1\leq i,j \leq k.
\end{align*}
\end{example}

Given a matrix $A\in\mathbb{C}^{n\times n}$,  let us close this section with the study of  the mapping of the resolvent operator $(A-\lambda I_n)^{-1}$ on its generalized left and right eigenvectors.  Evidently, the orthogonal property between two generalized left and right eigenvectors will be influenced after the mapping. This influence will play a crucial role in proposing an eigenvalue shift technique later on.
%
%
\begin{Theorem}\label{lemA1}
Let $A$ be a matrix in $\mathbb{C}^{n\times n}$
and let $\lambda_0 \in \sigma(A)$.
Suppose  $\{u_i\}_{i=1}^p$ and $\{v_i\}^p_{i=1}$ be the generalized left and right eigenvectors corresponding to $J_p(\lambda_0)$. Let $\lambda$ be a complex number and $\lambda\not\in\sigma(A)$.
Then
\begin{enumerate}
\item[(a)] For $1\leq i \leq  p$,
\begin{eqnarray*}
(A-\lambda I_n)^{-1}v_i &=& \sum\limits_{j=1}^i \dfrac{(-1)^{i-j}v_j}{(\lambda_0-\lambda)^{i-j+1}},\\
u_i^*(A-\lambda I_n)^{-1} &=&\sum\limits_{j=1}^i \dfrac{(-1)^{i-j}u_i^*}{(\lambda_0-\lambda)^{i-j+1}}.
\end{eqnarray*}

\item[(b)]  For  $i+j\leq p$ and $i,j \geq 1$, $u_i^* (A-\lambda I_n)^{-1}v_j=0$.
\end{enumerate}
\end{Theorem}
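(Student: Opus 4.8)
The plan is to reduce everything to the Jordan-chain relations obeyed by the two families of vectors and then verify the closed-form expressions by direct substitution. Writing $U = [\,u_1 \ \cdots \ u_p\,]$ and $V = [\,v_1 \ \cdots \ v_p\,]$, the defining relations $U^* A = J_p^\top(\lambda_0) U^*$ and $A V = V J_p(\lambda_0)$ translate, rowwise and columnwise, into
\[
(A - \lambda_0 I_n) v_i = v_{i-1}, \qquad u_i^*(A - \lambda_0 I_n) = u_{i-1}^*, \qquad 1 \le i \le p,
\]
with the convention $v_0 = 0$, $u_0 = 0$. Shifting by $\lambda$ gives $(A - \lambda I_n) v_i = v_{i-1} + (\lambda_0 - \lambda) v_i$ and likewise $u_i^*(A - \lambda I_n) = u_{i-1}^* + (\lambda_0 - \lambda) u_i^*$; since $\lambda \notin \sigma(A)$ the matrix $A - \lambda I_n$ is invertible, so these identities may be inverted.

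For part (a) I would apply $A - \lambda I_n$ to the proposed right-hand side $\sum_{j=1}^{i} (-1)^{i-j}(\lambda_0-\lambda)^{-(i-j+1)} v_j$, substitute $(A-\lambda I_n) v_j = v_{j-1} + (\lambda_0-\lambda) v_j$, and observe that the resulting double sum telescopes: the contribution of $v_{j-1}$ at summation index $j$ cancels the contribution of $(\lambda_0-\lambda) v_j$ at summation index $j-1$, for every $j$ in range, leaving exactly $v_i$. Hence that right-hand side is $(A - \lambda I_n)^{-1} v_i$. (Equivalently one can induct on $i$ using the recursion $(A-\lambda I_n)^{-1} v_i = (\lambda_0-\lambda)^{-1}\bigl(v_i - (A-\lambda I_n)^{-1} v_{i-1}\bigr)$, with base case $i=1$.) The formula for $u_i^*(A-\lambda I_n)^{-1}$ is proved in the same manner, multiplying the candidate expression on the right by $A - \lambda I_n$ and using $u_j^*(A-\lambda I_n) = u_{j-1}^* + (\lambda_0-\lambda) u_j^*$; note the numerator there should read $u_j^*$, not $u_i^*$.

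Part (b) then follows by combining (a) with Theorem~\ref{thm:bio}. Using the first identity of part (a),
\[
u_i^*(A-\lambda I_n)^{-1} v_j = \sum_{k=1}^{i} \frac{(-1)^{i-k}}{(\lambda_0-\lambda)^{i-k+1}}\, u_k^* v_j ,
\]
and for every $k$ with $1 \le k \le i$ we have $2 \le k+j \le i+j \le p$, so Theorem~\ref{thm:bio}(b), applied with $\lambda_1 = \lambda_2 = \lambda_0$ and both block sizes equal to $p$, forces $u_k^* v_j = 0$. Every summand vanishes, which proves (b).

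I do not anticipate a genuine difficulty here: the content is two routine verifications and one short deduction. The only places that require care are the index bookkeeping in the telescoping sum (equivalently, pinning down the base case and the reindexing in the inductive argument), and checking that the hypothesis $i+j \le p$ in part (b) is precisely what places $k+j$ inside the vanishing range $k+j \le \max\{p,p\}$ of the generalized biorthogonality property.
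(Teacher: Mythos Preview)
Your proposal is correct and follows essentially the same approach as the paper: both establish (a) from the Jordan-chain recursion $(A-\lambda I_n)^{-1}v_i = (\lambda_0-\lambda)^{-1}\bigl(v_i - (A-\lambda I_n)^{-1}v_{i-1}\bigr)$ (the paper writes out the base cases $i=1,2$ and appeals to the evident induction; you phrase the same computation as a telescoping verification), and both derive (b) immediately from (a) together with Theorem~\ref{thm:bio}. One small slip: in your argument for (b) you say ``first identity of part~(a)'' but the displayed expansion in $u_k^* v_j$ actually uses the second identity (for $u_i^*(A-\lambda I_n)^{-1}$); either identity works, of course.
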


\begin{proof}
Set $\lambda \in \mathbb{C}$ and $\lambda\not\in\sigma(A)$.  Since  $(A- \lambda I_n) v_1 = (\lambda_0- \lambda) v_1$, $(A- \lambda I_n) ^{-1} v_1 = \dfrac{v_1}{(\lambda_0 - \lambda)}$.  Also, $(A- \lambda I_n) v_2 = (\lambda_0 - \lambda) v_2 + v_1$.
It follows that
\begin{equation*}
(A- \lambda I_n) ^{-1}v_2 = \dfrac{v_2 -  (A- \lambda I_n) ^{-1}v_1 }{(\lambda_0 - \lambda)} = \sum\limits_{j=1}^2 \dfrac{(-1)^{2-j}v_j}{(\lambda_0-\lambda)^{2-j+1}}.
\end{equation*}
Subsequently, a similar proof can be given without difficulty to  different generalized  left and right eigenvectors, that is, (a) follows.

Apply Theorem~\ref{thm:bio} and part (a). It is true that  $u_i^* (A-\lambda I_n)^{-1}v_j=0$,  for $i+j \leq p$ and $i,j \geq 1$. The result of (b) is established.\hfill $\Box$
\end{proof}

Now we have enough tools to analyze the eigenvalue shift problems. We first establish a result for eigenvalues with even algebraic multiplicities and then generalize it to eigenvalues with odd algebraic multiplicities.  We show how to shift a desired eigenvalue of a given matrix without changing the remaining ones.

\section{Eigenvalue of Even Algebraic Multiplicities}
\label{sec:even}

In this section, we propose a shift technique to move an eigenvalue with even algebraic multiplicities  to a desired one. We claim that based on our approach, we can keep parts of generalized eigenvectors
unchanged.  We also investigate all of the possible Jordan structures after the shift at the end of this section.

\begin{Theorem}\label{lem:shiftA}
Let $A \in \mathbb{C}^{n\times n}$,  let $\lambda_0\in \sigma(A)$ with algebraic multiplicity $2k$ and geometric multiplicity $1$, and let
$\{u_i\}_{i=1}^{2k}$ and $\{v_i\}_{i=1}^{2k}$ be the left and right Jordan chains  for $\lambda_0$.
Define two matrices
\begin{equation}\label{eq:evenuv}
U : =\begin{bmatrix}
u_1 & u_2 &\cdots & u_k
\end{bmatrix},\,
V : =\begin{bmatrix}
v_1 & v_2 &\cdots & v_k
\end{bmatrix}.
\end{equation}
If matrices $R^*\in\mathbb{C}^{k\times n}$ and $L\in\mathbb{C}^{n\times k}$
are, respectively, the right and left inverses of $V$ and $U^*$ satisfying
\begin{equation}\label{eq:evenRU}
U^* L = R^* V=I_k,
\end{equation}
then the shifted matrix
\begin{equation}\label{widAeven}
\widehat{A} : =  A+(\lambda_1-\lambda_0)R_1R_2^*
\end{equation}
where
$
R_1 : =\begin{bmatrix}
V & L
\end{bmatrix}$ and
$R_2 :=
\begin{bmatrix}
R & U
\end{bmatrix},
$ has the following properties:
\begin{itemize}
\item[(a)]
The eigenvalues of $\widehat{A}$ consist of those of $A$, except that the eigenvalue $\lambda_0$
of $A$ is  replaced by $\lambda_1$.
\item[(b)]  $ \widehat{A}v_1=\lambda_1 v_1$, $u_1^* \widehat{A}=u_1^* \lambda_1$, $\widehat{A}v_{i+1}=\lambda_1 v_{i+1}+v_i$, $u_{i+1}^* \widehat{A}=\lambda_1 u_{i+1}^*+u_i^*$, for $i=1,\ldots,k-1$. That is,
\begin{align*}
\widehat{A} V &= VJ_{k}(\lambda_1),\\
U^* \widehat{A}&= J_{k}^\top(\lambda_1)U^*.
 \end{align*}

\end{itemize}
\end{Theorem}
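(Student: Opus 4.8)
My plan is to prove (b) by direct computation and then feed the resulting structure into a Sylvester-type determinant identity to obtain (a). For (b), I would write the update compactly as $\widehat{A}=A+(\lambda_1-\lambda_0)(VR^*+LU^*)$, since $R_1R_2^*=VR^*+LU^*$. The one fact from Section~2 that makes everything collapse is the generalized biorthogonality of Theorem~\ref{thm:bio}: applied to the single block $J_{2k}(\lambda_0)$ (so $p=q=2k$) it gives $u_i^*v_j=0$ whenever $i+j\le 2k$, and in particular $U^*V=0$ because all indices satisfy $i,j\le k$. Then for $1\le i\le k$ I would compute $\widehat{A}v_i=Av_i+(\lambda_1-\lambda_0)(VR^*v_i+LU^*v_i)$, insert the Jordan relation $Av_i=\lambda_0 v_i+v_{i-1}$ (with $v_0=0$), and use $R^*v_i=e_i$ (from $R^*V=I_k$) together with $U^*v_i=0$ to obtain $\widehat{A}v_i=\lambda_1 v_i+v_{i-1}$, i.e. $\widehat{A}V=VJ_k(\lambda_1)$. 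The left-chain identity $U^*\widehat{A}=J_k^\top(\lambda_1)U^*$ is entirely symmetric, using $u_i^*V=0$ and $u_i^*L=e_i^\top$ (from $U^*L=I_k$). This step is routine; the only thing to get right is the bookkeeping of which orthogonality relation kills which term.

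For (a) I would pass to characteristic polynomials. For $t\notin\sigma(A)$ the matrix $A-tI_n$ is invertible, so Sylvester's determinant identity gives
\[
\det(\widehat{A}-tI_n)=\det(A-tI_n)\,\det\!\big(I_{2k}+(\lambda_1-\lambda_0)R_2^*(A-tI_n)^{-1}R_1\big).
\]
The heart of the argument is to evaluate the $2k\times 2k$ determinant on the right. Partitioning it according to $R_1=[\,V\ L\,]$ and $R_2=[\,R\ U\,]$ produces four blocks; the crucial one is the $(2,1)$ block $U^*(A-tI_n)^{-1}V$, which vanishes by Theorem~\ref{lemA1}(b) (again because $i+j\le 2k$ for $i,j\le k$). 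Hence the inner matrix is block upper triangular and its determinant factors. For the two diagonal blocks I would use Theorem~\ref{lemA1}(a) together with the relations~\eqref{eq:evenRU} to identify $R^*(A-tI_n)^{-1}V$ as the upper-triangular Toeplitz matrix $(J_k(\lambda_0)-tI_k)^{-1}$ and $U^*(A-tI_n)^{-1}L$ as its transpose $(J_k^\top(\lambda_0)-tI_k)^{-1}$.

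With these identifications each factor becomes
\[
\det\!\big(I_k+(\lambda_1-\lambda_0)(J_k(\lambda_0)-tI_k)^{-1}\big)=\frac{\det(J_k(\lambda_1)-tI_k)}{\det(J_k(\lambda_0)-tI_k)}=\frac{(\lambda_1-t)^k}{(\lambda_0-t)^k},
\]
using $J_k(\lambda_0)+(\lambda_1-\lambda_0)I_k=J_k(\lambda_1)$. Multiplying the two equal factors yields $\det(\widehat{A}-tI_n)=\det(A-tI_n)\,(\lambda_1-t)^{2k}/(\lambda_0-t)^{2k}$. Writing $\det(A-tI_n)=(\lambda_0-t)^{2k}g(t)$ with $g(\lambda_0)\ne 0$ (since $\lambda_0$ has algebraic multiplicity $2k$) cancels the factor $(\lambda_0-t)^{2k}$ and leaves $\det(\widehat{A}-tI_n)=(\lambda_1-t)^{2k}g(t)$. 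This is exactly the assertion of (a): the full multiplicity-$2k$ eigenvalue $\lambda_0$ is replaced by $\lambda_1$, while every other eigenvalue is preserved with its multiplicity.

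The main obstacle is the $2k\times 2k$ determinant in (a), and within it the two facts that make it work: the vanishing of the off-diagonal block $U^*(A-tI_n)^{-1}V$, which delivers the block triangularity, and the clean Toeplitz identification of the diagonal blocks. Both rest squarely on Theorem~\ref{lemA1} and the inverse relations~\eqref{eq:evenRU}; once they are in place the factor $(\lambda_1-t)^{2k}/(\lambda_0-t)^{2k}$ is forced. A minor care point is that Sylvester's identity is available only for $t\notin\sigma(A)$, so I would conclude by noting that two polynomials agreeing off a finite set are identical, making the displayed characteristic-polynomial identity valid for all $t$.
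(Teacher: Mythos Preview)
Your proof is correct and follows essentially the same route as the paper: Sylvester's determinant identity, the vanishing of $U^*(A-tI_n)^{-1}V$ via Theorem~\ref{lemA1}(b) to get block triangularity, and the identification of the diagonal blocks yielding the factor $\big(\frac{\lambda_1-t}{\lambda_0-t}\big)^{2k}$, with part~(b) done by the same direct computation from the biorthogonality of Theorem~\ref{thm:bio}. The only cosmetic difference is that you extend the identity to all $t$ by the ``two polynomials agreeing off a finite set'' principle, whereas the paper uses a continuity/$\epsilon$-perturbation argument; both close the gap equally well.
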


\begin{proof}
This proof can be obtained by considering the characteristic polynomial of $\widehat{A}$, that is, if $\lambda\not\in\sigma(A)$, then
\begin{eqnarray}\label{charMhat}
  \det(\widehat{A}-\lambda I_{n}) &=&
    \det(A- \lambda I_n)
    \det\left( I_n +
    (\lambda_1 - \lambda_0) R_1R_2^*
    (A - \lambda  I_n)^{-1}
    \right) \\
   &=&     \det(A- \lambda I_n)
    \det\left(
    I_{2k} +
    (\lambda_1 - \lambda_0) R_2^*
    (A - \lambda  I_n)^{-1}
R_1\right) \nonumber\\
   & = &  \det(A- \lambda I_n) \left (
  \dfrac{\lambda_1 - \lambda}{\lambda_0 - \lambda }
   \right)^{2k}  (\mbox{ by Theorem~\ref{thm:bio} and Theorem~\ref{lemA1} }).\nonumber
 \end{eqnarray}

Since $\det(A -\lambda I_n)$ is a polynomial
with a finite number of zeros, we may choose a small perturbation $\epsilon > 0$ such that $\det(A -(\lambda+\epsilon) I_n) \neq 0$, that is, $A -(\lambda+\epsilon) I_n$ is invertible. This implies that if $\lambda\in\sigma(A)$, we can consider the characteristic polynomial
$\det(\widehat{A} -(\lambda+\epsilon) I_n)$ so that
\begin{eqnarray*}
  \det(\widehat{A}-(\lambda+\epsilon) I_{n}) &=&
 \det(A- (\lambda+\epsilon) I_n) \left (
  \dfrac{\lambda_1 - (\lambda+\epsilon)}{\lambda_0 - (\lambda+\epsilon) }
   \right)^{2k} .
\end{eqnarray*}
Observe that both sides of the above equation are continuous functions of $\epsilon$. By the so-called continuity argument method, letting $\epsilon\rightarrow 0$ gives that
\begin{eqnarray*}
  \det(\widehat{A}-\lambda I_{n}) &=&
 \det(A- \lambda I_n) \left (
  \dfrac{\lambda_1 - \lambda}{\lambda_0 - \lambda}
   \right)^{2k} .
\end{eqnarray*}
It follows that (a) holds.

To prove (b), we see that by Theorem~\ref{thm:bio},
\begin{eqnarray*}
 \widehat{A}v_1 \!&\!=\!&\!  Av_1+[(\lambda_1-\lambda_0)R_1R_2^* ] v_1
 = \lambda_0 v_1 + (\lambda_1 - \lambda_ 0) v_1
 = \lambda_1 v_1.\\
 \widehat{A}v_{i+1} \!&\!=\!&\! Av_i + [(\lambda_1-\lambda_0)
 R_1 R_2^*]v_i =   \lambda_1 v_{i+1} + v_i,  \quad i = 1,\ldots, k-1.
\end{eqnarray*}
The same approaches can be carried out to obtain
$u_1^* \widehat{A}=u_1^* \lambda_1$  and $u_{i+1}^* \widehat{A}=\lambda_1 u_{i+1}^*+u_i^*$, for $i=1,\ldots,k-1$. \hfill $\Box$
\end{proof}

Next, it is natural to ask about
the eigenstructure of this updated matrix $\widehat{A}$
defined in Theorem~\ref{lem:shiftA}.
To this end,  we use the notions given in Theorem~\ref{lem:shiftA}.
Assume without loss of generality that ${A}\in\mathbb{C}^{2k\times 2k}$, and that $P = \bb v_1& \ldots& v_{2k}\eb$ is a nonsingular matrix so that
\begin{eqnarray*}
P^{-1} {A} P &=& J_{2k}(\lambda_0),
\end{eqnarray*}
By Theorem~\ref{thm:bio}, we know that the matrix product
$\begin{bmatrix}
U^* \\ U_1^*
\end{bmatrix}
P\in\mathbb{C}^{2k \times 2k}$ is a lower triangular Hankel matrix, where
$U_1 = \bb u_{k+1} &\ldots& u_{2k}\eb$.  This suggests a way for constructing matrices $U$ and $V$ in~\eqref{eq:evenuv},
that is,
\begin{align*}
U = P^{-*}\begin{bmatrix} 0\\ Q\end{bmatrix}
\mbox{ and }
V = �P \begin{bmatrix}I_k \\ 0\end{bmatrix},
\end{align*}
where $Q$ is  some nonsingular and lower triangular Hankel matrix in $\mathbb{C}^{k\times k}$.
Also, the right and left inverses of~\eqref{eq:evenRU} can be
denoted by
\begin{equation*}
R =P^{-*}\begin{bmatrix} I_k \\ S_1^*\end{bmatrix},\,
L =P\begin{bmatrix} S_2 \\ Q^{-*} \end{bmatrix},
\end{equation*}
where $S_1$ and $S_2$ are arbitrary $k\times k$ matrices.
It follows from~\eqref{widAeven} that
\begin{align}\label{widhatAeven}
\widehat{A} &=  A+(\lambda_1-\lambda_0)(VR^*+LU^*)\\
            &=  P\left(  J_{2k}(\lambda_0)+(\lambda_1-\lambda_0)\left(
            \begin{bmatrix}I_k&S_1\\ 0&0 \end{bmatrix}+\begin{bmatrix}0 & S_2 Q^*\\0 & I_k \end{bmatrix}\right)\right)P^{-1}\nonumber\\
            &=   P \begin{bmatrix}J_{k}(\lambda_1) &(\lambda_1-\lambda_0)(S_1+S_2 Q^*)\\ 0&J_{k}(\lambda_1) \end{bmatrix}P^{-1}\nonumber
\end{align}
That is,
\begin{equation}\label{simeven}
\widehat{A} \sim T : =
\begin{bmatrix}J_{k}(\lambda_1) & C \\ 0&J_{k}(\lambda_1) \end{bmatrix},
\end{equation}
for some matrix $C\in \mathbb{C}^{k\times k}$.
Based on~\eqref{simeven}, the next two results discuss
the possible eigenspace and eigenstructure types of  the matrix $T$ (i.e., $\widehat{A}$). To facilitate our discussion below, we use $\{e_i\}_{1\leq i\leq k}$ to denote the standard basis in $\mathbb{R}^k$ from now on.

\begin{Lemma}\label{Eigeven}
Let $C$ be a matrix in $\mathbb{C}^{k\times k}$.
If $T$ is a matrix given by
\begin{eqnarray}\label{eq:evenAhat}
T &=& \left[\begin{array}{cc}J_k(\lambda) & C \\0 & J_k(\lambda)\end{array}\right],
\end{eqnarray}
then
$T$ has the eigenspace
\begin{align*}
E_\lambda=
\left\{
\begin{array}{rl}
\mbox{span}\left\{\bb e_1 \\ 0\eb,\bb 0 \\ e_1\eb \right\}, & \mbox{ if } c_{k,1}=0, \, Ce_1 = 0.    \smallskip\\
\mbox{span}\left\{\bb e_1 \\ 0\eb,\bb  N_k Ce_1 \\ e_1\eb\right\}, & \mbox{ if } c_{k,1} =0, \, Ce_1\neq0.   \smallskip\\
\mbox{span}\left\{\bb e_1 \\ 0\eb\right\}, & \mbox{ if }
c_{k,1} \neq0.
\end{array}
\right.
\end{align*}
 corresponding to the eigenvalue $\lambda$, where $N_k^\top = \lambda I_k - J_k(\lambda)$ and $C = [c_{i,j}]_{k\times k}$.

\end{Lemma}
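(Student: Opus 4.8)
The plan is to compute directly the null space of $T - \lambda I_{2k}$, exploiting the block structure of $T$. Writing a candidate eigenvector as $\begin{bmatrix} x \\ y \end{bmatrix}$ with $x, y \in \mathbb{C}^k$, the equation $(T - \lambda I_{2k})\begin{bmatrix} x \\ y \end{bmatrix} = 0$ splits into the pair
\begin{equation*}
(J_k(\lambda) - \lambda I_k) x + C y = 0, \qquad (J_k(\lambda) - \lambda I_k) y = 0.
\end{equation*}
Since $J_k(\lambda) - \lambda I_k$ is the nilpotent shift with one-dimensional kernel spanned by $e_1$, the second equation forces $y = \beta e_1$ for some scalar $\beta$. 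Substituting back, the first equation becomes $(J_k(\lambda) - \lambda I_k)x = -\beta C e_1$, so the analysis reduces to deciding for which $\beta$ the vector $-\beta C e_1$ lies in the range of $J_k(\lambda) - \lambda I_k$, and, when it does, describing the solution set for $x$.

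The key observation is that the range of $J_k(\lambda) - \lambda I_k$ is exactly $\mathrm{span}\{e_1, \dots, e_{k-1}\}$, i.e. the set of vectors whose last coordinate vanishes. Hence $-\beta C e_1$ is in the range if and only if its $k$-th coordinate $-\beta c_{k,1}$ is zero. This is where the three cases arise. If $c_{k,1} \neq 0$, we must take $\beta = 0$, forcing $y = 0$ and $x \in \ker(J_k(\lambda) - \lambda I_k) = \mathrm{span}\{e_1\}$, giving the one-dimensional eigenspace $\mathrm{span}\{\begin{bmatrix} e_1 \\ 0 \end{bmatrix}\}$. If $c_{k,1} = 0$, then $\beta$ is free; for each $\beta$ there is a particular solution $x_0$ of $(J_k(\lambda) - \lambda I_k)x = -\beta C e_1$, and the general solution is $x_0 + \alpha e_1$. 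I would check that with the notation $N_k^\top = \lambda I_k - J_k(\lambda)$ (so that $N_k$ is the strictly-lower-shift and $N_k^\top$ the strictly-upper-shift matrix), the choice $x_0 = N_k C e_1$ indeed satisfies $(J_k(\lambda) - \lambda I_k) N_k C e_1 = -C e_1$ whenever $(Ce_1)_k = c_{k,1} = 0$, because $(\lambda I_k - J_k(\lambda)) N_k = N_k^\top N_k$ acts as the identity on vectors supported in coordinates $1, \dots, k-1$. This pins down the basis $\{\begin{bmatrix} e_1 \\ 0\end{bmatrix}, \begin{bmatrix} N_k C e_1 \\ e_1 \end{bmatrix}\}$ in the case $c_{k,1} = 0$, $Ce_1 \neq 0$; and when additionally $Ce_1 = 0$, the particular solution collapses to $0$ and the basis becomes $\{\begin{bmatrix} e_1 \\ 0\end{bmatrix}, \begin{bmatrix} 0 \\ e_1 \end{bmatrix}\}$.

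The only real subtlety — and the step I expect to need the most care — is the verification of the identity relating $N_k$, $J_k(\lambda) - \lambda I_k$, and the condition $c_{k,1} = 0$: namely that $N_k$ furnishes a genuine preimage under $J_k(\lambda) - \lambda I_k$ precisely on the range, and that no lower-dimensional degeneracy is overlooked when $Ce_1 = 0$ versus $Ce_1 \neq 0$ (one must confirm the two listed spanning vectors are linearly independent, which is immediate from their second blocks $0$ and $e_1$). Everything else is bookkeeping with the nilpotent shift. I would present the argument by first solving the second block equation, then splitting on whether $c_{k,1}$ vanishes, and in the vanishing case splitting further on whether $Ce_1$ vanishes, reading off the eigenspace in each case, which reproduces exactly the three-way description in the statement.
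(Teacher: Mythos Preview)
Your proposal is correct and follows essentially the same approach as the paper: split a candidate eigenvector into blocks, reduce to the pair of equations $(J_k(\lambda)-\lambda I_k)x_1=-Cx_2$, $(J_k(\lambda)-\lambda I_k)x_2=0$, and read off the cases. The paper's own proof stops after writing down~\eqref{eq:Jordan} and says only that the eigenspaces are ``obtained by directly solving'' those equations; you have simply carried out that direct solution in full, including the verification that $N_k$ furnishes a particular preimage when $c_{k,1}=0$, so your write-up is a fleshed-out version of the same argument rather than a different route.
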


\begin{proof}
Given two vectors $x_1$ and $x_2$ in $\mathbb{C}^{k\times 1}$, let  $v^\top =  \bb x_1^\top & x_2^\top \eb$ be an eigenvector of  $T$ with respect to $\lambda$.
It follows that
\begin{align*}
T v =\lambda v,
\end{align*}
or, equivalently,
\begin{align}\label{eq:Jordan}
\begin{array}{rcl}
(J_k(\lambda)-\lambda I_k) x_1&= &-Cx_2,\\
(J_k(\lambda)-\lambda I_k) x_2&=& 0.
\end{array}
\end{align}
The possible types of eigenspace can be obtained by directly solving~\eqref{eq:Jordan}.

\end{proof}

Lemma~\ref{Eigeven} suggests a possible characterization of all Jordan canonical forms of $T$.
Note that if $x$ is an eigenvector of the matrix $T$ associated with a Jordan block $J_p(\lambda)$, then its corresponding Jordan canonical basis $\gamma$, \emph{a cycle of generalized eigenvectors of $T$},  can be expressed as
$ \gamma = \left \{ v_p,v_{p-1},\ldots, v_1   \right \}$, where
 \begin{equation*}
 v_i = (T -\lambda I_{2k})^{p-i}(x)  \mbox{ for } i< p  \mbox{ and } v_p = x.
 \end{equation*}
 In this sense, we classify the possible eigenstructure types
 by using the notions defined in Lemma~\ref{Eigeven}
  as follows:\\
%
%
Case 1.
$c_{k,1}=0, Ce_1 = 0$. Then, $T \sim J_{k}(\lambda) \oplus J_{k}(\lambda)$ with two cycles of generalized eigenvectors taken as
\begin{eqnarray*}
\gamma_{1} & =&
 \left\{
\left[\begin{array}{c}  e_1 \\ 0 \end{array}\right],\ldots,
\left[\begin{array}{c} e_{k} \\ 0  \end{array}\right]
\right\},\\
\gamma_2 & =&
\left\{
\left[\begin{array}{c}  0 \\ e_1 \end{array}\right],
\left[\begin{array}{c} N_k Ce_2 \\ e_2  \end{array}\right],\ldots,
\left[\begin{array}{c} \sum\limits_{i=2}^k N_k^{k-i+1} Ce_i \\ e_k  \end{array}\right]
\right\}.
\end{eqnarray*}
%
%
Case 2. $c_{k,1}=0, Ce_1 \neq 0$. Then $T \sim J_{k}(\lambda) \oplus J_{k}(\lambda)$ with two cycles of generalized eigenvectors taken as
\begin{eqnarray*}
\gamma_1 & =&
\left\{
\left[\begin{array}{c}  e_1 \\ 0 \end{array}\right],\ldots,
\left[\begin{array}{c} e_{k} \\ 0  \end{array}\right]
\right\},\\
\gamma_2 & =& \left\{
\left[\begin{array}{c}  N_k Ce_1 \\ e_1 \end{array}\right],\ldots,
\left[\begin{array}{c} \sum\limits_{i=1}^k N_k^{k-i+1} Ce_i \\ e_k  \end{array}\right]
\right\}.
\end{eqnarray*}
%
%
Case 3. $c_{k,1}\neq 0$.   Then $T\sim J_{2k}(\lambda)$ with the cycle of generalized eigenvectors taken as
\begin{eqnarray*}
\gamma & =&
\left\{
\left[\begin{array}{c}  c_{k,1} e_1 \\ 0 \end{array}\right],\ldots,
\left[\begin{array}{c} c_{k,1} e_k \\ 0  \end{array}\right],\right.\\
&&\left.
\left[\begin{array}{c} N_k Ce_1 \\ e_1  \end{array}\right],
\ldots,
\left[\begin{array}{c}  \sum\limits_{i=1}^{k} N_k^{k-i+1} C e_i \\ e_k  \end{array}\right]
\right\}.
\end{eqnarray*}

Here, we have identified all possible eigenstructure types of the matrix $T$. This observation leads directly to the following result.

%
%
\begin{Theorem}\label{lemshiftAeven}
Let $\widehat{A}$ be the matrix defined by~\eqref{widhatAeven}.
Then,
the Jordan canonical form of the matrix $\widehat{A}$
is either $J_{k}(\lambda) \oplus J_{k}(\lambda)$ or  $J_{2k}(\lambda)$.
\end{Theorem}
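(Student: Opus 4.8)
The plan is to deduce Theorem~\ref{lemshiftAeven} directly from the case analysis already carried out, so the bulk of the work is to verify that the three cases in that analysis are exhaustive and that each produces the claimed Jordan form. By~\eqref{simeven} we know $\widehat{A}\sim T=\left[\begin{smallmatrix}J_k(\lambda_1)&C\\0&J_k(\lambda_1)\end{smallmatrix}\right]$ for some $C\in\mathbb{C}^{k\times k}$, so it suffices to determine the Jordan structure of $T$. Since every eigenvalue of $T$ equals $\lambda_1$, the entire Jordan structure is governed by the sizes of the Jordan blocks, and those are determined by the dimensions $\dim\ker(T-\lambda_1 I_{2k})^m$ for $m\ge 1$. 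The trichotomy of Lemma~\ref{Eigeven} already pins down $\dim\ker(T-\lambda_1 I_{2k}) = \dim E_{\lambda_1}$, which is $2$ in Cases~1 and~2 and $1$ in Case~3; this immediately tells us there are exactly two Jordan blocks in the first two cases and exactly one in the third.

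First I would note that $T$ has algebraic multiplicity $2k$ for $\lambda_1$ (the characteristic polynomial is $(\lambda_1-\lambda)^{2k}$), so in Case~3, where the geometric multiplicity is $1$, the Jordan form must be a single block $J_{2k}(\lambda_1)$; no further computation is needed beyond Lemma~\ref{Eigeven}. In Cases~1 and~2 the geometric multiplicity is $2$, so $T$ has exactly two Jordan blocks of sizes $p$ and $q$ with $p+q=2k$ and, say, $p\le q$. To show $p=q=k$ it is enough to exhibit, in each case, two cycles of generalized eigenvectors of length exactly $k$ whose union is linearly independent — which is precisely what $\gamma_1,\gamma_2$ in the displayed case analysis provide. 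So the key steps are: (i) verify that the vectors listed in $\gamma_1$ and $\gamma_2$ genuinely form cycles of generalized eigenvectors of $T$ of length $k$, i.e. that applying $T-\lambda_1 I_{2k}$ sends each vector to the previous one and annihilates the first; (ii) verify that $\gamma_1\cup\gamma_2$ is linearly independent, which follows at once from the fact that the second coordinates run through $e_1,\dots,e_k$ in each $\gamma_i$ and the blocks sit in complementary coordinate positions; (iii) conclude that $T\sim J_k(\lambda_1)\oplus J_k(\lambda_1)$.

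For step (i) the relevant computation is that $T-\lambda_1 I_{2k}=\left[\begin{smallmatrix}J_k(\lambda_1)-\lambda_1 I_k & C\\0 & J_k(\lambda_1)-\lambda_1 I_k\end{smallmatrix}\right]=\left[\begin{smallmatrix}N_k^\top \cdot(-1)?\end{smallmatrix}\right]$ — more precisely, writing $M_k:=J_k(\lambda_1)-\lambda_1 I_k$ for the nilpotent shift, one has $M_k e_{i}=e_{i-1}$ (with $e_0:=0$), and $N_k^\top = -M_k$, so $N_k = -M_k^\top$ satisfies $N_k e_i = e_{i+1}$ for $i<k$ and $N_k e_k = 0$. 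Then $(T-\lambda_1 I_{2k})\left[\begin{smallmatrix}x\\e_{i}\end{smallmatrix}\right]=\left[\begin{smallmatrix}M_k x + C e_i\\ e_{i-1}\end{smallmatrix}\right]$, and one checks termwise that each consecutive pair in $\gamma_2$ differs by exactly this operation using $M_k N_k^{k-i+1}Ce_i$ telescoping appropriately, with the top of the chain landing in $\ker$. This is a routine but slightly fiddly induction on the index; the bookkeeping of the sums $\sum_{i} N_k^{k-i+1}Ce_i$ is where I expect the only real friction, and it is exactly the content already spelled out in the Case~1/Case~2 displays, so in the write-up I would simply cite those displays as the witnesses.

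The main obstacle, such as it is, is not in any single step but in being careful that the case split is genuinely exhaustive and mutually exclusive: the hypotheses $\{c_{k,1}=0,\ Ce_1=0\}$, $\{c_{k,1}=0,\ Ce_1\neq 0\}$, $\{c_{k,1}\neq 0\}$ clearly partition all possibilities for $C$, and Lemma~\ref{Eigeven} together with the subsequent case analysis covers each, so the conclusion that the Jordan form is always one of $J_k(\lambda_1)\oplus J_k(\lambda_1)$ or $J_{2k}(\lambda_1)$ is forced. Hence the proof is essentially a one-line appeal: \emph{the preceding case analysis exhausts all possibilities for $C$, and in each case the Jordan form of $T$ (hence of $\widehat{A}$) is as stated.}

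\begin{proof}
By~\eqref{simeven}, $\widehat A\sim T=\left[\begin{array}{cc}J_k(\lambda_1)&C\\0&J_k(\lambda_1)\end{array}\right]$ for some $C\in\mathbb{C}^{k\times k}$, so it suffices to identify the Jordan canonical form of $T$. The only eigenvalue of $T$ is $\lambda_1$, with algebraic multiplicity $2k$. The three conditions $\{c_{k,1}=0,\ Ce_1=0\}$, $\{c_{k,1}=0,\ Ce_1\neq0\}$, and $\{c_{k,1}\neq0\}$ partition all possibilities for $C$. In the first two cases, Lemma~\ref{Eigeven} gives $\dim E_{\lambda_1}=2$, and the cycles $\gamma_1,\gamma_2$ exhibited in Case~1 and Case~2 are two chains of generalized eigenvectors of $T$, each of length $k$, whose union is linearly independent (their second block-coordinates run through $e_1,\dots,e_k$), whence $T\sim J_k(\lambda_1)\oplus J_k(\lambda_1)$. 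In the third case, Lemma~\ref{Eigeven} gives $\dim E_{\lambda_1}=1$, so $T$ has a single Jordan block, necessarily $J_{2k}(\lambda_1)$, as witnessed by the chain $\gamma$ in Case~3. Therefore the Jordan canonical form of $\widehat A$ is either $J_k(\lambda)\oplus J_k(\lambda)$ or $J_{2k}(\lambda)$. \hfill $\Box$
\end{proof}
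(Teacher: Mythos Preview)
Your proposal is correct and follows essentially the same approach as the paper: the paper establishes~\eqref{simeven}, proves Lemma~\ref{Eigeven}, carries out the exhaustive case analysis (Cases~1--3) with explicit cycles $\gamma_1,\gamma_2,\gamma$, and then states that ``this observation leads directly to'' Theorem~\ref{lemshiftAeven}. Your write-up simply makes that one-line inference explicit, citing the same cases and the same witnesses; the only minor imprecision is the phrase ``their second block-coordinates run through $e_1,\dots,e_k$,'' which is literally true only for $\gamma_2$ --- the linear independence of $\gamma_1\cup\gamma_2$ is better phrased by noting that the matrix of these $2k$ vectors has the block form $\left[\begin{smallmatrix}I_k & *\\ 0 & I_k\end{smallmatrix}\right]$.
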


We shall now give an example to demonstrate the result of Theorem~\ref{lem:shiftA} and Theorem~\ref{lemshiftAeven}.
\begin{example}\label{ex3}
Let $A=J_{4}(1) \oplus J_{2}(3) \in\mathbb{R}^{6\times 6}$ and
$U = \begin{bmatrix}
e_4 & e_3
\end{bmatrix}
$ and $V= \begin{bmatrix}
e_1 & e_2
\end{bmatrix}$ be the first half of the
generalized left and right eigenvectors of $A$ corresponding to the
eigenvalue $\lambda = 1$. We then want to
change the eigenvalue $\lambda=1$ to $\lambda=2$ and observe the subsequent Jordan canonical forms.

\begin{itemize}
\item[(a)] From~\eqref{eq:evenRU}, if we take the right and left inverses $R$ and $L$ to be
\[
R=V= \begin{bmatrix}
e_1 & e_2
\end{bmatrix},\,L=U= \begin{bmatrix}
e_4 & e_3
\end{bmatrix},
\]
then the shifted matrix $\widehat{A}$ satisfies
\[
   \widehat{A} :=
      A+(2-1)R_1R_2^*
      =J_{4}(2) \oplus J_{2}(3),
\]
where
$
R_1  =\begin{bmatrix}
V & L
\end{bmatrix}$ and
$R_2 =
\begin{bmatrix}
R & U
\end{bmatrix}.
$
\item[(b)]
From~\eqref{eq:evenRU}, if we take the right and left inverses $R$ and $L$ to be
\[
    R=\bb e_1 & e_2-e_3\eb,\,L=U=\bb e_4 & e_3\eb,
\]
then the shifted matrix $\widehat{A}$ satisfies
\[
     \widehat{A} :=  A+(2-1)R_1R_2^*=J_{2}(2) \oplus J_{2}(2)\oplus J_{2}(3),
\]
where
$
R_1  =\begin{bmatrix}
V & L
\end{bmatrix}$ and
$R_2 =
\begin{bmatrix}
R & U
\end{bmatrix}.
$
\end{itemize}
\end{example}

Note that Theorem~\ref{lemshiftAeven} also implies that  the geometric multiplicity of $\widehat{A}$, defined in~\eqref{widhatAeven},
 is at most two.
On the other hand,  an analogous approach can seemingly be used to derive a shift technique and  characterize the corresponding eigenstructure for the eigenvalue with odd algebraic multiplicities. Indeed, this analysis for the odd case is much more complicated due to the orthogonal property caused by the middle eigenvector as it can be seen in Theorem~\ref{thm:bio} and a different approach is needed for our discussion.

%
%

\section{Eigenvalue with Odd Algebraic Multiplicities}
Let us now consider the eigenvalue with odd algebraic multiplicities.
If $p = q$ and $p,q$ are odd integers, observe two full rank matrices
$U = \begin{bmatrix} u_1&\ldots & u_{p}\end{bmatrix}$
and $V = \begin{bmatrix} v_1&\ldots& v_{p}\end{bmatrix}$. It follows from~\eqref{eq:biothogonal} that the diagonal elements
of the lower triangular Hankel matrix $U^*V$ is constant. Thus, the inner product $u_{\frac{p+1}{2}}^* v_{\frac{p+1}{2}} \neq 0$. With this in mind, the
%
following result shows how to change an eigenvalue
with odd algebraic multiplicities.
\begin{Theorem}\label{lem:shiftAodd}
Let $A \in \mathbb{C}^{n\times n}$, and $\lambda_0$  be an eigenvalue of $A$ having algebraic multiplicity $2k+1$ and geometric multiplicity $1$, and let $\{u_i\}_{i=1}^{2k+1}$ and $\{v_i\}_{i=1}^{2k+1}$ be the left and right Jordan chains for $\lambda_0$.
Define two matrices
\begin{align}\label{oddUV}
U : =\begin{bmatrix}
u_1 & u_2 &\cdots & u_k
\end{bmatrix},\,
V : =\begin{bmatrix}
v_1 & v_2 &\cdots & v_k
\end{bmatrix}
\end{align}
and a vector $r = \dfrac{u_{k+1}}{ v_{k+1}^* u_{k+1}}$.
If matrices $R^*\in\mathbb{C}^{k\times n}$ and $L\in\mathbb{C}^{n\times k}$
are, respectively, the right and left inverses of $V$ and $U^*$ satisfying
\begin{align}\label{oddRL}
R^* V=U^* L=I_k,
\end{align}
then the shifted matrix
\begin{equation}\label{eq:widehatodd}
\widehat{A}  :=  A+(\lambda_1-\lambda_0)R_1R_2^*
\end{equation}
where
$
R_1 : =\begin{bmatrix}
V & v_{k+1} & L
\end{bmatrix}$ and
$R_2 :=\begin{bmatrix}
R & r & U
\end{bmatrix},
$ has the following properties:
\begin{itemize}
\item[(a)]
The eigenvalues of $\widehat{A}$ consist of those of $A$, except that the eigenvalue $\lambda_0$
of $A$ is  replaced by $\lambda_1$.
\item[(b)]
$\widehat{A}v_1=\lambda_1 v_1,\,\, u_1^* \widehat{A}=u_1^* \lambda_1, \,\,
\widehat{A}v_{i+1}=\lambda_1 v_{i+1}+v_i,\,\, u_{i+1}^* \widehat{A}=\lambda_1 u_{i+1}^*+u_i^*$,  for $i=1,\ldots,k-1$.That is to say,
\begin{align*}
\widehat{A} V &= VJ_{k}(\lambda_1)\\
U^* \widehat{A}&= J_{k}^\top(\lambda_1)U^*\\
 \end{align*}
\end{itemize}
\end{Theorem}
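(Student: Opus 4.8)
The plan is to follow the template of the proof of Theorem~\ref{lem:shiftA}; the only genuine novelty in the odd case is the extra rank-one term $v_{k+1}r^*$ contributed by the middle pair $v_{k+1},r$, which I would absorb into a block-triangular determinant argument.

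For part~(a), for $\lambda\notin\sigma(A)$ I would start, mirroring~\eqref{charMhat}, from
\[
\det(\widehat A-\lambda I_n)=\det(A-\lambda I_n)\,\det\!\bigl(I_{2k+1}+(\lambda_1-\lambda_0)\,R_2^*(A-\lambda I_n)^{-1}R_1\bigr),
\]
using the Sylvester determinant identity. Writing $M:=R_2^*(A-\lambda I_n)^{-1}R_1$ in the $3\times3$ block form induced by $R_2=\bb R & r & U\eb$ and $R_1=\bb V & v_{k+1} & L\eb$, I would invoke Theorem~\ref{lemA1}(b) together with Theorem~\ref{thm:bio} to show that the three blocks below the block diagonal, namely $U^*(A-\lambda I_n)^{-1}V$, $U^*(A-\lambda I_n)^{-1}v_{k+1}$ and $r^*(A-\lambda I_n)^{-1}V$, all vanish, since each of their entries is an inner product of generalized eigenvectors whose index sum does not exceed $2k+1$. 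Hence $M$ is block upper triangular and its determinant factors through the diagonal blocks. These are: $R^*(A-\lambda I_n)^{-1}V$, which by Theorem~\ref{lemA1}(a) and $R^*V=I_k$ is upper triangular with constant diagonal $1/(\lambda_0-\lambda)$; the scalar $r^*(A-\lambda I_n)^{-1}v_{k+1}$, which by Theorem~\ref{lemA1}(a) and biorthogonality collapses to $\frac{u_{k+1}^*v_{k+1}}{(\lambda_0-\lambda)\,\overline{v_{k+1}^*u_{k+1}}}=\frac{1}{\lambda_0-\lambda}$; and $U^*(A-\lambda I_n)^{-1}L$, which by Theorem~\ref{lemA1}(a) and $U^*L=I_k$ is lower triangular with constant diagonal $1/(\lambda_0-\lambda)$. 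Multiplying the $2k+1$ resulting scalar factors $1+(\lambda_1-\lambda_0)/(\lambda_0-\lambda)=(\lambda_1-\lambda)/(\lambda_0-\lambda)$ gives $\det(\widehat A-\lambda I_n)=\det(A-\lambda I_n)\bigl((\lambda_1-\lambda)/(\lambda_0-\lambda)\bigr)^{2k+1}$ for $\lambda\notin\sigma(A)$, and the perturbation/continuity argument already used in Theorem~\ref{lem:shiftA} removes this restriction and yields~(a).

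For part~(b), I would substitute directly. For $1\le j\le k$ one checks that $R_2^*v_j$ has blocks $e_j$, $0$, $0$, because $R^*v_j=e_j$ while $r^*v_j=0$ and $U^*v_j=0$ by biorthogonality (all relevant index sums are at most $2k+1$); hence $R_1R_2^*v_j=Ve_j=v_j$, and therefore $\widehat Av_1=\lambda_0v_1+(\lambda_1-\lambda_0)v_1=\lambda_1v_1$ and $\widehat Av_{i+1}=\lambda_0v_{i+1}+v_i+(\lambda_1-\lambda_0)v_{i+1}=\lambda_1v_{i+1}+v_i$ for $i=1,\dots,k-1$. Symmetrically, $u_i^*R_1=\bb 0 & 0 & e_i^\top\eb$ for $1\le i\le k$ (using $u_i^*V=0$, $u_i^*v_{k+1}=0$ and $u_i^*L=e_i^\top$), so $u_i^*R_1R_2^*=e_i^\top U^*=u_i^*$, which gives $u_1^*\widehat A=\lambda_1u_1^*$ and $u_{i+1}^*\widehat A=\lambda_1u_{i+1}^*+u_i^*$. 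Collecting the columns into $V$ and the rows into $U$ produces the two displayed relations $\widehat AV=VJ_k(\lambda_1)$ and $U^*\widehat A=J_k^\top(\lambda_1)U^*$.

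The step I expect to be the main obstacle is precisely what distinguishes the odd case from the even one: the middle $1\times1$ diagonal block. One must verify that the blocks flanking the middle row and column of $M$ vanish, which works only because $(k+1)+k=2k+1$ lands exactly on the cutoff of Theorems~\ref{thm:bio} and~\ref{lemA1}(b) with no slack to spare, and one must evaluate $r^*(A-\lambda I_n)^{-1}v_{k+1}$ exactly; this is where the specific normalization $r=u_{k+1}/(v_{k+1}^*u_{k+1})$ (equivalently $r^*v_{k+1}=1$) together with the non-vanishing of the Hankel diagonal entry $u_{k+1}^*v_{k+1}$ is essential. The remaining computations are routine bookkeeping parallel to Theorem~\ref{lem:shiftA}.
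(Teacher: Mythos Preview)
Your proposal is correct and follows essentially the same approach as the paper: the paper likewise displays the $3\times3$ block form of $R_2^*(A-\lambda I_n)^{-1}R_1$, observes (via Theorems~\ref{thm:bio} and~\ref{lemA1}) that it is block upper triangular with diagonal blocks contributing the factor $((\lambda_1-\lambda)/(\lambda_0-\lambda))^{2k+1}$, invokes the same perturbation/continuity argument for $\lambda\in\sigma(A)$, and proves (b) by the same direct substitution. Your write-up is in fact slightly more explicit than the paper's about the triangular structure of the diagonal blocks and the evaluation of the middle scalar $r^*(A-\lambda I_n)^{-1}v_{k+1}$.
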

\begin{proof}
Since
\begin{eqnarray*}
R_2^* (A - \lambda  I)^{-1} R_1\!\!\!\! \!& = \!&\!\!\!\!
\left[\begin{array}{ccc} R^* (A - \lambda  I)^{-1} V  \!\!&\! R^*(A - \lambda  I_n)^{-1}v_{k+1} \!\!&\!
     R^* (A - \lambda  I)^{-1} L\\0 \!\!&\! \dfrac{1}{\lambda_0-\lambda} \!\!&\! r^* (A - \lambda  I)^{-1} L\\0 \!\!&\! 0 \!\!&\! U^* (A - \lambda  I)^{-1} L\end{array}\right],
\end{eqnarray*}
it follows from Theorem~\ref{thm:bio} and Theorem~\ref{lemA1} that the characteristic polynomial of  $\widehat{A}$ satisfies
\begin{eqnarray*}\label{charMhat1}
  \det(\widehat{A}-\lambda I_{n}) &=&
    \det(A- \lambda I_n)
    \det\left( I_n +
    (\lambda_1 - \lambda_0) R_1R_2^*
    (A - \lambda  I_n)^{-1}
    \right) \\
   & = &    \det(A- \lambda I_n)
    \det\left(
    I_{2k+1} +
    (\lambda_1 - \lambda_0) R_2^*
    (A - \lambda  I_n)^{-1}
R_1\right) \\
   & = &   \det(A- \lambda I_n) \left (
  \dfrac{\lambda_1 - \lambda}{\lambda_0 - \lambda }
   \right)^{2k+1},
 \end{eqnarray*}
if $\lambda\not\in\sigma(A)$. For the case $\lambda\in\sigma(A)$, a small perturbation of $\lambda$ is applied to make $A-\lambda I $ nonsingular. Thus, a similar proof like the one in Theorem~\ref{lem:shiftA} is followed. %
This proves (a).

We see by direct computation and Theorem~\ref{thm:bio} that
\begin{eqnarray*}
 \widehat{A}v_1 \!\!&\!\!=\!\!&\!\!
 Av_1+[(\lambda_1-\lambda_0)R_1R_2^* ] v_1
 = \lambda_0 v_1 + (\lambda_1 - \lambda_ 0) v_1
 = \lambda_1 v_1,\\
\widehat{A}v_{i+1} \!\!&\!\!=\!\!&\!\!\lambda_0 v_{i+1}+v_i+(\lambda_1-\lambda_0)R_1\begin{bmatrix}e_{i+1}\\0\end{bmatrix}=
\lambda_1 v_{i+1}+v_i, \quad  i = 1,\ldots, k-1.
\end{eqnarray*}
In an analogous way,  we can obtain $u_1^* \widehat{A}=u_1^* \lambda_1$ and $u_{i+1}^* \widehat{A}=\lambda_1 u_{i+1}^*+u_i^*$, for $i=1,\ldots,k-1$, and (b) follows. \hfill $\Box$
\end{proof}

Theorem~\ref{lem:shiftAodd} provides us a way to update the eigenvalue of a given Jordan block of odd size. Consequently, we are interested in analyzing possible eigenstructure types of the shifted matrix $\widehat{A}$. We start this discussion in an analogous way from Section~\ref{sec:even} by using the notions
defined in Theorem~\ref{lem:shiftAodd}. Assume for simplicity that
the given matrix $A$ is of size $(2k+1)\times (2k+1)$ and $P =
\bb v_1 & \ldots & v_{2k+1}\eb$ is a nonsingular matrix such that
\begin{align*}
P^{-1} {A} P=J_{2k+1}(\lambda_0).
\end{align*}
It follows that $V =P\begin{bmatrix}I_k \\ 0\end{bmatrix}$.
Set $U =P^{-*}\begin{bmatrix} 0\\ Q \end{bmatrix}$
for some lower triangular Hankel matrix $Q\in \mathbb{C}^{k\times k}$,
and the vector $v_{k+1}=P\hat{e}_{k+1}$, where $\hat{e}_{k+1}$
is a unit vector in $\mathbb{R}^{2k+1}$ with a 1 in position $k+1$ and $0$'s elsewhere.
Corresponding to formula~\eqref{oddRL} and the vector $r$ given in Theorem~\ref{lem:shiftAodd}, we define %
\begin{align*}
R =P^{-*}
\begin{bmatrix} I_k \\ S_1^*\end{bmatrix},\,
L =P\begin{bmatrix} S_2 \\ Q^{-*} \end{bmatrix},
\, r = P^{-*} \begin{bmatrix} 0 \\ w^*/\overline{w_1} \end{bmatrix},
\end{align*}
where $S_1^*$ and $S_2$ are arbitrary matrices in $\mathbb{C}^{(k+1)\times k}$, $w = [w_i]_{k\times 1}$ is a vector in $\mathbb{C}^{k+1}$, and $\overline{w_1}$ is the complex conjugate of $w_1$.
Then the shifted matrix~\eqref{eq:widehatodd} satisfies
\begin{align}\label{widhatAodd}
\widehat{A}&\! =  A+(\lambda_1-\lambda_0)(VR^*+v_{k+1}r^*+LU^*)\\
            &\! =  P\left[\!  J_{2k+1}(\lambda_0)+(\lambda_1-\lambda_0)\left(
            \begin{bmatrix}I_k\!&\!S_1\\ 0 \!&\!0 \end{bmatrix}
            + \hat{e}_{k+1}
            \begin{bmatrix} 0 \!&\! w/ {w_1}\end{bmatrix}
            +\begin{bmatrix}0  \!&\! S_2Q^{*}\\0  \!&\! I_k \end{bmatrix}\right)\!\right]P^{-1}\nonumber\\
          &\! =   P \begin{bmatrix}J_{k}(\lambda_1)\!&\!(\lambda_1-\lambda_0)\hat{s}_1 \!&\!(\lambda_1-\lambda_0)
            (S_1\begin{bmatrix}0 \\ I_k\end{bmatrix}
            +\begin{bmatrix} I_k \!&\! 0 \end{bmatrix} S_2Q^{*})
            \\
            0
            \!&\!\lambda_1\!&\!
            (\lambda_1-\lambda_0)(\hat{s}_2+
            \begin{bmatrix}
            w_2\!&\!\ldots\!&\!w_{k+1}
            \end{bmatrix})
              \!&\!\\
              0\!&\! 0\!&\!J_{k}(\lambda_1) \end{bmatrix}P^{-1},\nonumber
\end{align}
where $\hat{s}_1$ is the first column of $S_1$ and $\hat{s}_2$ is the last row of $S_2$.
This implies that after the shift approach the matrix $\widehat{A}$ is similar to an upper triangular matrix
 \begin{equation}\label{eq:similarodd}
S :=  \left[\begin{array}{ccc}J_k(\lambda)& a & C \\ 0 & \lambda & b^\top\\0&0& J_k(\lambda)\end{array}\right],
 \end{equation}
for some matrix $C\in\mathbb{C}^{k\times k}$ and vectors $a$, $b\in\mathbb{C}^{k}$.
This above similarity property allows us to discuss the eigeninformation of the shifted matrix $\widehat{A}$. That is, the eigeninformation of $\widehat{A}$ can be studied indirectly by considering the upper triangular matrix $T$ of~\eqref{eq:similarodd}

\begin{Lemma}
Let $C$ be a matrix in $\mathbb{C}^{k\times k}$ and let $a$, $b$ be two vectors in $\mathbb{C}^{k}$. If $S$ is given by
\begin{eqnarray*}
S &=& \left[\begin{array}{ccc}J_k(\lambda)& a & C \\ 0 & \lambda & b^\top\\0&0& J_k(\lambda)\end{array}\right],
\end{eqnarray*}
then $S$ has the eigenspace
\begin{align}\label{elambdaodd}
E_\lambda=
\left\{
\begin{array}{l}
\mbox{span}\left\{\bb e_1  \\0\\0\eb,\bb  N_k Ce_1 \\0\\e_1\eb\right\},
 \mbox{ if }
 b_1 = 0, a_k \neq  0,c_{k,1} =  0. \vspace{3mm}\\
\mbox{span}\left\{\bb e_1 \\0\\0\eb,\bb  N_k (\frac{-c_{k,1}}{a_k}a+Ce_1) \\ \frac{-c_{k,1}}{a_k}\\e_1\eb\right\},  \\
 \mbox{ if }
 b_1 = 0, a_k \neq 0,c_{k,1} \neq  0.  \vspace{3mm}\\
\mbox{span}\left\{\bb e_1  \\0\\0\eb,\bb N_k a \\1\\0\eb\right\}, \\
\mbox{ if } b_1 = 0,  a_k =0, c_{k,1} \neq 0
\mbox{ or } b_1 \neq 0,  a_k = 0.
 \vspace{3mm}\\
\mbox{span}\left\{\bb e_1 \\0\\0\eb,\bb  N_k(a+Ce_1 ) \\1\\e_1 \eb,\bb  N_k Ce_1  \\0\\e_1\eb \right\},
 \\
 \mbox{ if }
 b_1 = 0, a_k = 0,c_{k,1} =  0. \vspace{3mm}\\
\mbox{span}\left\{\bb e_1  \\0\\0\eb \right\}, \mbox{ if }
b_1 \neq 0,  a_k  \neq 0. \vspace{3mm}
\end{array}
\right.
\end{align}
 corresponding to the eigenvalue $\lambda$, where $N_k^\top= \lambda I_k - J_k(\lambda)$, $b_1 = e_1^\top b$,
 $a_k  = e_k^\top a$, and $C = [c_{i,j}]_{k\times k}$.
\end{Lemma}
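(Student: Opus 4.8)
The strategy is to solve the eigenvector equation $Sv=\lambda v$ directly, splitting $v$ into three blocks $v^{\top}=\bb x_1^{\top}&\xi&x_2^{\top}\eb$ with $x_1,x_2\in\mathbb{C}^{k}$ and $\xi\in\mathbb{C}$. Writing out $Sv=\lambda v$ block by block and using $N_k^{\top}=\lambda I_k-J_k(\lambda)$ gives the three conditions
\begin{align*}
N_k^{\top}x_1 &= -\xi a - Cx_2,\\
b^{\top}x_2 &= 0,\\
N_k^{\top}x_2 &= 0.
\end{align*}
From the last equation, $x_2$ lies in the kernel of $N_k^{\top}$, which is one-dimensional and spanned by $e_1$; hence $x_2=\alpha e_1$ for some scalar $\alpha$. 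The middle equation then forces $\alpha b^{\top}e_1=\alpha b_1=0$, so either $b_1=0$ (and $\alpha$ is free) or $b_1\neq0$ (and $\alpha=0$). Substituting $x_2=\alpha e_1$ into the top equation leaves $N_k^{\top}x_1=-\xi a-\alpha Ce_1$; this is solvable for $x_1$ if and only if the right-hand side lies in the range of $N_k^{\top}$, i.e. in the span of $e_1,\ldots,e_{k-1}$, which is exactly the condition that its last coordinate vanish: $-\xi a_k-\alpha c_{k,1}=0$. When it is solvable, $N_k^{\top}$ acts invertibly from $\mathrm{span}\{e_1,\ldots,e_{k}\}/\ker$ and one checks directly that $N_k(-\xi a-\alpha Ce_1)$ is a particular solution for $x_1$ up to an additive multiple of $e_1$.

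The remaining work is a finite case analysis on the two Boolean quantities $b_1=0$ versus $b_1\neq0$ and $a_k=0$ versus $a_k\neq0$, together with the auxiliary condition $c_{k,1}=0$ that only becomes relevant when it can compete with $a_k$ in the constraint $\xi a_k+\alpha c_{k,1}=0$. For each branch one records the free parameters among $(\xi,\alpha)$ together with the free multiple of $e_1$ always available in $x_1$, and reads off a basis of $E_\lambda$; for instance, if $b_1\neq0$ and $a_k\neq0$ then $\alpha=0$ and $\xi a_k=0$ force $\xi=0$, leaving only the multiple of $e_1$ in the first block, giving the one-dimensional space $\mathrm{span}\{\bb e_1\\0\\0\eb\}$, whereas if $b_1=0$, $a_k=0$, $c_{k,1}=0$ then $\xi,\alpha$ are both free and independent, yielding the three-dimensional case. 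The only mildly delicate points are verifying that the two sub-branches with $b_1=0$, $a_k=0$, $c_{k,1}\neq0$ and with $b_1\neq0$, $a_k=0$ genuinely collapse to the same two-dimensional space (in the first, $\alpha=0$ is forced by $\xi a_k+\alpha c_{k,1}=0$ since $a_k=0$ and $c_{k,1}\neq0$, and $\xi$ is free; in the second, $\alpha=0$ from $b_1\neq0$ and $\xi$ is free because $a_k=0$ kills the constraint), and confirming that $N_k(\tfrac{-c_{k,1}}{a_k}a+Ce_1)$ in the second listed case is obtained by scaling $\alpha$ to $-c_{k,1}/a_k$ so that the constraint $\xi a_k+\alpha c_{k,1}=0$ is met with $\xi=1$.

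The main obstacle is purely organizational rather than conceptual: one must be careful that the case split is exhaustive and non-overlapping, and that in each case the stated spanning vectors are actually \emph{in} $E_\lambda$ (plug back into the three block equations) and are linearly independent (immediate, since their second and third blocks already distinguish them). No new machinery beyond the kernel/range description of the nilpotent $N_k^{\top}=\lambda I_k-J_k(\lambda)$ is needed; everything reduces to this rank-one-corank structure of a single Jordan block.
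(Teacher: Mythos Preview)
Your proposal is correct and follows exactly the paper's approach: split $v$ into three blocks, reduce $Sv=\lambda v$ to the system $(J_k(\lambda)-\lambda I_k)x_1=-\xi a-Cx_3$, $b^{\top}x_3=0$, $(J_k(\lambda)-\lambda I_k)x_3=0$, and then carry out the case analysis on $b_1$, $a_k$, $c_{k,1}$. (One cosmetic slip: since $N_k^{\top}=\lambda I_k-J_k(\lambda)$, your first block equation should read $N_k^{\top}x_1=\xi a+Cx_2$ rather than $N_k^{\top}x_1=-\xi a-Cx_2$; this sign does not affect the solvability constraint $\xi a_k+\alpha c_{k,1}=0$ or the resulting spans.)
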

\begin{proof}
Corresponding to the eigenvalue $\lambda$, let $v^\top = \bb x_1^\top & x_2^\top & x_3^\top\eb$ be an eigenvector of $S$, where
$x_1, x_3\in\mathbb{C}^{k}$ and $x_2\in \mathbb{C}$.
It is true that the eigenvector $v$ satisfies
\begin{align*}
S v = \lambda v,
\end{align*}
or, equivalently,
\begin{align*}
(J_k(\lambda)-\lambda I_k) x_1&=-x_2 a-  C x_3,\\
0 x_2         &=-  b^\top x_3 ,\\
(J_k(\lambda)-\lambda I_k) x_3&=0.
\end{align*}
We then have the all possible types of eigenspace by discussing
the cases given in~\eqref{elambdaodd} step by step.\hfill $\Box$
 \end{proof}

Similarly, we proceed to discuss all possible Jordan canonical forms of this shifted matrix~\eqref{eq:widehatodd} through the discussion of this special block upper triangular  matrix $S$ given in~\eqref{eq:similarodd}. One point should be made clear first. Due to the effects of vectors $a$ and $b$ in~$S$. It is too complicate to evaluate the generalized eigenvectors in an analogous way as we did in Section~\ref{sec:even}.  We, therefore, seek to determine the Jordan canonical forms by means of some kind of matrix transformation so that the structure and eigeninfomration of $S$ are simplified and unchanged respectively.

With this in mind, we start by choosing an invertible matrix
\begin{equation*}
Y=\bb I_k & y & W\\ 0 & 1&z^\top\\0 & 0 & I_k\eb
\in \mathbb{C}^{(2k+1)\times(2k+1)}.
\end{equation*}
It is easy to check that
\begin{equation*}
Y^{-1}=\bb I_k & -y & -W+y z^\top \\ 0 & 1& -z^\top\\0 & 0 & I_k\eb.
\end{equation*}
We then transform matrix $S$ in~\eqref{eq:similarodd} by using $Y$ and $Y^{-1}$ such that
\begin{align*}
Y S Y^{-1}=\bb J_k(\lambda) & a -(J_k(\lambda) - \lambda I)y & \widetilde{C}\\
0 & \lambda& b^\top+ z^\top(J_k(\lambda)-\lambda I)\\0 & 0 & J_k(\lambda)\eb,
\end{align*}
where
\begin{eqnarray*}
\widetilde{C} = [\tilde{c}_{i,j}]_{k\times k} =  W J_k(\lambda)-J_k(\lambda)W+C+yb^\top-az^\top+(J_k(\lambda)-\lambda I)y z^\top.
\end{eqnarray*}
Specify the vectors $a = [a_i]_{k\times 1} $ and $b = [b_i]_{k\times 1}$ by their components,
and  let the matrix $D =[d_{i,j}]_{k\times k} =C + yb^\top-az^\top+(J_k(\lambda)-\lambda I)y z^\top $.
%

Notice, first, that  if we choose
\begin{eqnarray*}
y^\top  =\bb y_1 &  a_1&  a_2& \ldots&  a_{k-1}\eb \mbox{ and }z^\top =\bb -b_2& -b_3& \ldots& -b_k& z_k\eb,
\end{eqnarray*}
 then for any $y_1$ and $z_k \in \mathbb{C}$ we have
\begin{equation}\label{eq:simple1}
\tilde{a} := a - (J_k(\lambda) - \lambda I) y =\bb   0 \\ a_k  \eb  \mbox{ and } \tilde{b}^\top := b^\top + z^\top \left( J_k(\lambda)-\lambda  I \right)=\bb b_1 & 0 \eb.
\end{equation}
Next, observe that
\begin{align*}
\left( W J_k(\lambda)-J_k(\lambda)W \right)e_i=
\left\{
\begin{array}{rl}
\left[\begin{array}{c}-w_{2,1} \\ \vdots \\ -w_{k,1} \\ 0\end{array}\right], &  i = 1, \smallskip\\
\left[\begin{array}{c}w_{1, i-1} - w_{2,i} \\ \vdots \\ w_{k-1,i-1} - w_{k,i} \\ w_{k,i-1}\end{array}\right], &
 2\leq i\leq k,
\end{array}
\right.
\end{align*}
where $ W = [w_{i,j}]_{k\times k}$.
We then want to eliminate the elements in $\widetilde{C}$ by
choosing
%
\begin{align*}
w_{i+1,1}&=d_{i,1},\quad 1\leq i \leq k-1,\\
w_{i+1,j}&=w_{i,j-1}+d_{i,j},\quad 1\leq i \leq k-1,\,2\leq j\leq k,
\end{align*}
so that
\begin{subequations}\label{eq:simple2}
\begin{eqnarray}
\tilde{c}_{i,j} &=&
 0,  \quad 1\leq i < k, \,1\leq j\leq k,\\
\tilde{c}_{k,j} & =&  \sum\limits_{\ell= 0}^{j-1} d_{k-\ell, j-\ell},\quad 1\leq j\leq k.
\end{eqnarray}
\end{subequations}
From formulae~\eqref{eq:simple1} and~\eqref{eq:simple2}, it follows that  by choosing appropriate vectors $y$, $z$ and a matrix $W$, we can simplify the matrix $S$
such that
\begin{align*}
S\sim \widetilde{S} := \left[\begin{array}{ccc}J_k(\lambda)& \tilde{a} & \widetilde{C} \\ 0 & \lambda & \tilde{b}^\top\\0&0& J_k(\lambda)\end{array}\right],
\end{align*}
where $\tilde{a}=a_k e_k$, $\tilde{b}=b_1 e_1$ are vectors in $\mathbb{C}^k$, and $\widetilde{C} = [\tilde{c}_{i,j}]_{k\times k}$ satisfies $\tilde{c}_{i,j} = 0$, for $1\leq i \leq k-1$ and $1\leq j\leq k$.
Note that $\widetilde{C}$ is a matrix with all entries equal to zero except the ones on the last row and is said to be in \emph{lower concentrated form}~\cite{HornJohnson91}.
This observation can be used to classify all possible eigenstructure types of $S$ though the discussion of $\widetilde{S}$.  We list all the
classifications as follows:\\
%
%
Case 1.
%
%
$b_1 \neq 0, a_k \neq  0$.
\begin{itemize}
\item[1a.]
If $\tilde{c}_{k,1}=\ldots=\tilde{c}_{k,i}=0$ and $\tilde{c}_{k,i+1}\neq 0$,
for $ 0 \leq i \leq k -1$,
then $S \sim J_{2k+1}(\lambda)$ with the cycle of generalized eigenvectors taken as
\begin{eqnarray*}
\gamma & =&
\left\{
\left[\begin{array}{c}b_1 a_k e_1 \\ 0 \\ 0\end{array}\right],\ldots,
\left[\begin{array}{c} b_1 a_k e_{k} \\ 0 \\ 0 \end{array}\right],
\left[\begin{array}{c}  0  \\ b_1  \\ 0
\end{array}\right],
\right.\\
&& \left.
\left[\begin{array}{c} 0 \\ 0 \\
e_1
\end{array}\right],\ldots,\left[\begin{array}{c} 0 \\ 0 \\
e_i
\end{array}\right],
\left[\begin{array}{c}
0 \\ \tau_1 \\
f_{1}
\end{array}
\right],\ldots,
\left[\begin{array}{c}
0 \\ \tau_{k-i} \\
f_{k-i}
\end{array}
\right]
\right\},\\
\end{eqnarray*}
where
\begin{eqnarray*}
f_{1} & = & e_{i+1} \in\mathbb{C}^k,\quad
f_{j}  =  e_{i+j} + \frac{1}{b_1} \sum\limits_{s= 1}^{j-1}
\tau_s e_{j-s} \in\mathbb{C}^k, \quad  2\leq j\leq k-i,\\
\tau_j & = & \dfrac{- e_k^\top \widetilde{C} f_j
}{a_k}\in\mathbb{C}, \quad 1 \leq j \leq k-i.
\end{eqnarray*}
\item
[1b.]
If  $\tilde{c}_{k,1}=\ldots=\tilde{c}_{k,k}=0$, then  $S\sim J_{2k+1}(\lambda)$ with the cycle of generalized eigenvectors taken as

\begin{eqnarray*}
\gamma & =&
\left\{
\left[\begin{array}{c}b_1 a_k e_1 \\ 0 \\ 0\end{array}\right],\ldots,
\left[\begin{array}{c} b_1 a_k e_{k} \\ 0 \\ 0 \end{array}\right],
\left[\begin{array}{c}  0  \\ b_1  \\ 0
\end{array}\right],
\left[\begin{array}{c} 0 \\ 0 \\
e_1
\end{array}\right],\ldots,\left[\begin{array}{c} 0 \\ 0 \\
e_k
\end{array}\right] \right\}.
\end{eqnarray*}

\end{itemize}
%
%
Case 2.
$b_1 = 0, a_k \neq  0$.
\begin{itemize}
\item[2a.]
If $\tilde{c}_{k,1}=\ldots=\tilde{c}_{k,k-1}=0$ and $\tilde{c}_{k,k}\neq 0$,
then $S\sim J_{k+1}(\lambda)\oplus J_{k}(\lambda)$ with the cycles of generalized eigenvectors taken as
\begin{eqnarray*}
\gamma_{1} \!&\! =\!&\!
\left\{
\left[\begin{array}{c} a_{k}e_1 \\ 0 \\ 0\end{array}\right],\ldots,\left[\begin{array}{c} a_{k}e_{k} \\ 0 \\ 0 \end{array}\right],
\left[\begin{array}{c} 0 \\ 1 \\ 0 \end{array}\right]
\right\},\\
\gamma_{2}       \!&\!=\!&\!
\left\{
\left[\begin{array}{c}0 \\ 0 \\e_1\end{array}\right],\ldots,
\left[\begin{array}{c}0 \\ 0 \\e_{k-1}\end{array}\right],
\left[\begin{array}{c}0 \\ \frac{-\tilde{c}_{k,k}}{{a}_k} \\e_{k}\end{array}\right]
\right\}.\\
\end{eqnarray*}

\item[2b.]
If $\tilde{c}_{k,1}=\ldots=\tilde{c}_{k,i}=0$ and $\tilde{c}_{k,i+1}\neq 0$,
for $ 0 \leq i  < k -1$,
then $S\sim J_{2k-i}(\lambda)\oplus J_{i+1}(\lambda)$ with the cycles of generalized eigenvectors taken as
\begin{eqnarray*}
\gamma_{1} \!&\! =\!&\!
\left\{
\left[\begin{array}{c}m_1 \\n_1\end{array}\right],\ldots,
\left[\begin{array}{c}m_{2k-i} \\n_{2k-i} \end{array}\right]
\right\},\\
\gamma_{2}       \!&\!=\!&\!
\left\{
\left[\begin{array}{c}0 \\ 0 \\e_1\end{array}\right],\ldots,
\left[\begin{array}{c}0 \\ 0 \\e_i\end{array}\right],
\left[\begin{array}{c}0 \\ \frac{-\tilde{c}_{k,i+1}}{{a}_k} \\e_{i+1}\end{array}\right]
\right\}.\\
\end{eqnarray*}
where
\begin{eqnarray*}
m_j &=& \left\{
\begin{array}{rl}
\tilde{c}_{k,i+1}e_j, & 1\leq j \leq k,
\smallskip \\
0_{k\times 1}, & k+1 \leq j \leq 2k -i .
\end{array}
\right.
\\
n_j &=& \left\{
\begin{array}{ll}
  0_{(k+1)\times 1}, &  1\leq j < k - i + 1,
  \smallskip \\
  \bb 0 &   \sum\limits_{s= 0}^{j-k+i-1}
\alpha_s e_{j-k+i-s}^\top \eb^\top, &  k-i+1\leq j < 2k -2 i- 1,
\smallskip \\
\bb 0 &   \sum\limits_{s= 0}^{k-i-2}
\alpha_s e_{j-k+i-s}^\top \eb^\top, &  2k -2 i- 1\leq j \leq 2k - i- 1,
\smallskip \\
\bb \dfrac{-1}{a_k} \sum\limits_{s = {0}}^{k-i-2}
 \alpha_{s} \tilde{c}_{k, k-s} &   \sum\limits_{s= 0}^{k-i-2}
\alpha_s e_{k-s}^\top \eb^\top, &  j = 2k - i,
\end{array}
\right.
\\
\alpha_0 & = & 1,\quad
\alpha_{j}  = \dfrac{-1}{\tilde{c}_{k,i+1}} \sum\limits_{s = {0}}^{j-1}
 \alpha_{s} \tilde{c}_{k, i+2+s}\in\mathbb{C}, \quad 1\leq j \leq k-i-1.
\end{eqnarray*}
Here, if $k = i+2$, we should ignore the second case define in $n_j$
by using the third one directly.

\item
[2c.] If $\tilde{c}_{k,1}=\ldots=\tilde{c}_{k,k}=0$, then  $S\sim J_{k+1}(\lambda)\oplus J_{k}(\lambda)$
with the cycles of generalized eigenvectors taken as
\begin{eqnarray*}
\gamma_1       &=&
\left\{
\left[\begin{array}{c} a_k e_1 \\ 0 \\ 0 \end{array}\right],\ldots,
\left[\begin{array}{c} a_k e_k \\  0 \\0\end{array}\right],
\left[\begin{array}{c} 0  \\  1 \\0\end{array}\right]
\right
\},\\
\gamma_2       &=&
\left\{
\left[\begin{array}{c}0 \\ 0 \\e_1\end{array}\right],\ldots,
\left[\begin{array}{c}0 \\  0 \\e_{k}\end{array}\right]
\right\}.
\end{eqnarray*}
\end{itemize}
 %
 %
Case 3.$ b_1 \neq 0, a_k =  0$.
\begin{itemize}
\item[3a.]
If $\tilde{c}_{k,1}=\ldots=\tilde{c}_{k,i}=0$ and $\tilde{c}_{k,i+1}\neq 0$,
for $ 0 \leq i \leq k -1$,
then $S\sim J_{2k-i}(\lambda)\oplus J_{i+1}(\lambda)$
with the cycles of generalized eigenvectors taken as
\begin{eqnarray*}
\gamma_1 & =&
\left\{
\left[\begin{array}{c}m_1 \\n_1\end{array}\right],\ldots,
\left[\begin{array}{c}m_{2k-i} \\n_{2k-i} \end{array}\right]
\right\},\\
\gamma_2       &=&
\left\{
\left[\begin{array}{c}0 \\{b}_1 \\  0 \end{array}\right],
\left[\begin{array}{c}0 \\ 0 \\e_1\end{array}\right],\ldots,
\left[\begin{array}{c}0 \\ 0 \\e_i\end{array}\right]
\right\}.\\
\end{eqnarray*}
where
\begin{eqnarray*}
m_j &=& \left\{
\begin{array}{rl}
\tilde{c}_{k,i+1}e_j, & 1\leq j \leq k,
\smallskip \\
0_{k\times 1}, & k+1 \leq j \leq 2k -i .
\end{array}
\right.
\\
n_j &=& \left\{
\begin{array}{ll}
  0_{(k+1)\times 1}, &  1\leq j < k - i ,
  \smallskip \\
   \left[\begin{array}{cc} b_1 &   0\end{array}\right]^\top  , &   j =  k - i  ,
  \smallskip \\
  \left[\begin{array}{cc}b_1\alpha_{j-k+i} &   \sum\limits_{s= 0}^{j-k+i-1}
\alpha_s e_{j-k+i-s}^\top\end{array}\right]^\top , &  k-i+1\leq j <  2k -2 i,
\smallskip \\
  \left[\begin{array}{cc} 0  &    \sum\limits_{s= 0}^{k-i-1}
\alpha_s e_{j-k+i-s}\end{array}\right]^\top, &  2k -2 i\leq j \leq 2k - i,
\end{array}
\right.
\\
\alpha_0 & = & 1,\quad
\alpha_{j}  = \dfrac{-1}{\tilde{c}_{k,i+1}} \sum\limits_{s = {0}}^{j-1}
 \alpha_{s} \tilde{c}_{k,i+2+s}\in\mathbb{C}, \quad 1 \leq j\leq k-i-1.
\end{eqnarray*}
Here, if $k = i+1$, we should ignore
the first case and replace
the third case define in $n_j$
by using the forth one directly.
\item
[3b.] If $\tilde{c}_{k,1}=\ldots=\tilde{c}_{k,k}=0$, then  $S\sim J_{k+1}(\lambda)\oplus J_{k}(\lambda)$
with the cycles of generalized eigenvectors taken as
\begin{eqnarray*}
\gamma_1      &=&
\left\{
\left[\begin{array}{c}0 \\ {b}_1 \\0\end{array}\right]
\left[\begin{array}{c}0 \\ 0 \\e_1\end{array}\right],\ldots,
\left[\begin{array}{c}0 \\  0 \\e_{k}\end{array}\right]
\right
\},\\
\gamma_2      &=&
\left\{
\left[\begin{array}{c} e_1 \\ 0 \\ 0 \end{array}\right],\ldots,
\left[\begin{array}{c} e_k \\  0 \\0\end{array}\right]
\right\}.
\end{eqnarray*}
\end{itemize}
%
%
Case 4. $\tilde{b}_1 = 0, \tilde{a}_k =  0$.
%
%
%
%
\begin{itemize}
\item[4a.] If $\tilde{c}_{k,1}\neq 0$, then  $S\sim J_{2k}(\lambda)\oplus J_{1}(\lambda)$
with the cycles of generalized eigenvectors taken as
\begin{eqnarray*}
\gamma_1       &=&
\left\{
\left[\begin{array}{c} \tilde{c}_{k,1} e_1 \\ 0 \\0\end{array}\right],\ldots,
\left[\begin{array}{c} \tilde{c}_{k,1} e_k \\ 0 \\ 0\end{array}\right],
\right.\\
&&
\left.
 \left[\begin{array}{c}0 \\  0 \\\psi_1e_{1}\end{array}\right],\ldots,
\left[\begin{array}{c}0 \\  0 \\ \sum\limits_{s=1}^k \psi_s e_{k-s+1}\end{array}\right]
\right
\},\\
\gamma_2      &=&
\left\{
\left[\begin{array}{c} 0 \\ 1 \\ 0 \end{array}\right]
\right\}.
\end{eqnarray*}
where
\begin{eqnarray*}
\psi_1 & = & 1,\quad
\psi_{j} = \dfrac{-1}{\tilde{c}_{k,1}} \sum\limits_{s = {1}}^{j-1}
 \psi_{s} \tilde{c}_{k,j-s+1}\in\mathbb{C}, \mbox{ for } j = 2,\ldots, k.
\end{eqnarray*}
\item[4b.]
If  $\tilde{c}_{k,1}=\ldots={\tilde{c}}_{k,i}=0$ and $\tilde{c}_{k,i+1}\neq 0$,
for $ 1 \leq i \leq k -1$,
then $S\sim J_{2k-i}(\lambda) \oplus J_{i}(\lambda) \oplus J_1(\lambda)$,
with the cycles of generalized eigenvectors taken as
\begin{eqnarray*}
\gamma_1 & =&
\left\{
\left[\begin{array}{c} m_1 \\ 0 \\ n_1\end{array}\right],\ldots,
\left[\begin{array}{c} m_{2k-i} \\ 0 \\ n_{2k-i}\end{array}\right]
%
\right\},\\
\gamma_2    &=&
\left\{
\left[\begin{array}{c}0 \\ 0 \\e_1\end{array}\right],\ldots,
\left[\begin{array}{c}0 \\ 0 \\e_i\end{array}\right]
\right\},\\
\gamma_3      &=&
\left\{
\left[\begin{array}{c}0 \\ 1 \\  0 \end{array}\right]
\right\},
\end{eqnarray*}
where
\begin{eqnarray*}
m_j &=& \left\{
\begin{array}{rl}
\tilde{c}_{k,i+1}e_j, & 1\leq j \leq k,
\smallskip \\
0_{k\times 1}, & k+1 \leq j \leq 2k -i .
\end{array}
\right.
\\
n_j &=& \left\{
\begin{array}{ll}
  0_{(k+1)\times 1}, &  1\leq j < k - i + 1,
  \smallskip \\
   \sum\limits_{s= 0}^{j-k+i-1}
\alpha_s e_{j-k+i-s}, &  k-i+1\leq j < 2k -2 i,
\smallskip \\
   \sum\limits_{s= 0}^{k-i-1}
\alpha_s e_{j-k+i-s} , &  2k -2 i\leq j \leq 2k - i,
\end{array}
\right.
\\
\alpha_0 & = & 1,\quad
\alpha_{j} = \dfrac{-1}{\tilde{c}_{k,i+1}} \sum\limits_{s = {0}}^{j-1}
 \alpha_{s} \tilde{c}_{k,i+2+s}\in\mathbb{C}, \mbox{ for } j = 1,\ldots, k- i -1.
\end{eqnarray*}
Here, if $k = i+1$, we should ignore the second case define in $n_j$
by using the third one directly.

\item
[4c.] If $\tilde{c}_{k,1}=\ldots=\tilde{c}_{k,k}=0$, then  $A\sim J_{k}(\lambda)\oplus J_{k}(\lambda)\oplus J_{1}(\lambda)$,
with the cycles of generalized eigenvectors taken as
\begin{eqnarray*}
\gamma_1      &=&
\left\{
\left[\begin{array}{c}0 \\ 0 \\e_1\end{array}\right],\ldots,
\left[\begin{array}{c}0 \\  0 \\e_{k}\end{array}\right]
\right
\},\\
\gamma_2         &=&
\left\{
\left[\begin{array}{c} e_1 \\ 0 \\ 0 \end{array}\right],\ldots,
\left[\begin{array}{c} e_k \\  0 \\0\end{array}\right]
\right\},\\
\gamma_3        &=&
\left\{
\left[\begin{array}{c}0 \\ 1 \\  0 \end{array}\right]
\right\}.
\end{eqnarray*}
\end{itemize}

%
%
\begin{Theorem}\label{lemshiftAodd}
Let $\widehat{A}$ be the matrix defined by~\eqref{widhatAodd}.
Then,
the Jordan canonical form of the matrix $\widehat{A}$
is one of the types:
$J_{2k+1}(\lambda)$,
$J_{2k-i+1}(\lambda) \oplus J_{i}(\lambda)$, and
$J_{2k-i}(\lambda) \oplus J_{i}(\lambda)\oplus J_{1}(\lambda)$,
for $1\leq i\leq k$.
\end{Theorem}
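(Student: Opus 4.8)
The plan is to deduce Theorem~\ref{lemshiftAodd} as a direct corollary of the exhaustive case analysis carried out immediately before it. The key observation is that the similarity chain $\widehat{A} \sim S \sim \widetilde{S}$, established in~\eqref{widhatAodd}, \eqref{eq:similarodd}, and the subsequent transformation via $Y$, reduces the problem to determining the Jordan canonical form of $\widetilde{S}$, which depends only on the scalars $b_1$, $a_k$, and the last row $(\tilde{c}_{k,1},\ldots,\tilde{c}_{k,k})$ of the lower concentrated matrix $\widetilde{C}$. Since $\widehat{A}$ has algebraic multiplicity $2k+1$ at $\lambda = \lambda_1$, every Jordan structure that arises must be a partition of $2k+1$; so the whole task is to collect, from Cases 1 through 4, exactly which partitions occur.

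First I would record that the parameters $b_1$ and $a_k$ are genuinely free: in the realization~\eqref{widhatAodd} they correspond (up to the nonzero scaling $\overline{w_1}$ coming from $v_{k+1}^* u_{k+1}\neq 0$, which is nonzero by the remark preceding Theorem~\ref{lem:shiftAodd}) to $b_1 = $ the first entry of $w$-data and $a_k = $ the last entry of the first column of $S_1$, both of which may be chosen arbitrarily or forced to be zero. Hence all four sign patterns $(b_1 \ne 0, a_k \ne 0)$, $(b_1 = 0, a_k \ne 0)$, $(b_1 \ne 0, a_k = 0)$, $(b_1 = 0, a_k = 0)$ are admissible, and within each the location of the first nonzero $\tilde{c}_{k,i+1}$ (or the all-zero case) is likewise unconstrained. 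Then I would simply read off the Jordan forms case by case: Case~1 (both nonzero) always gives $J_{2k+1}(\lambda)$; Case~2 ($b_1=0$, $a_k\ne 0$) gives $J_{2k-i}(\lambda)\oplus J_{i+1}(\lambda)$ for $0\le i\le k-1$ in subcases 2b/2a, and $J_{k+1}(\lambda)\oplus J_k(\lambda)$ in subcase 2c — all of the form $J_{2k+1-i}(\lambda)\oplus J_i(\lambda)$ with $1\le i\le k$; Case~3 is symmetric to Case~2 and produces the same family; Case~4 ($b_1=0$, $a_k=0$) splits off an extra $J_1(\lambda)$, giving $J_{2k-i}(\lambda)\oplus J_i(\lambda)\oplus J_1(\lambda)$ for $1\le i\le k$ (with the $i=k$ endpoint being $J_k(\lambda)\oplus J_k(\lambda)\oplus J_1(\lambda)$ from 4c, and the $i=1$ endpoint $J_{2k}(\lambda)\oplus J_1(\lambda)$ from 4a, which after reindexing fits the stated template). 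Assembling these three families yields exactly the list in the theorem statement.

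Two bookkeeping points need care. One is aligning the index ranges: the case analysis uses $0\le i\le k-1$ for the position of the first nonzero last-row entry, whereas the theorem is phrased with $1\le i\le k$ for the size of the smaller block; I would translate via $i \mapsto i+1$ and check the endpoints, in particular verifying that the ``all-zero last row'' subcases (1b, 2c, 3b, 4c) correspond to the extreme value $i=k$, and that the $2\times2$-block-count bound is respected. The other is confirming that the $\gamma_j$ listed in each subcase are genuinely Jordan chains of $\widetilde{S}$ (i.e.\ that $(\widetilde{S}-\lambda I)\gamma_j^{(m)} = \gamma_j^{(m-1)}$ and the top vectors are independent eigenvectors): this is a routine verification using $N_k^\top = \lambda I_k - J_k(\lambda)$ and the recursions defining $\tau_j$, $\alpha_j$, $\psi_j$, $f_j$, $m_j$, $n_j$, which I would state is straightforward and omit.

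The main obstacle is not conceptual but combinatorial: ensuring completeness, i.e.\ that the four cases (times their internal subcases) genuinely exhaust all possibilities for the triple $(b_1, a_k, \text{last row of }\widetilde{C})$, and that no spurious Jordan type slips in. I would argue completeness by noting the dichotomy $b_1 = 0$ or not, and independently $a_k = 0$ or not, is an exhaustive $2\times 2$ split, and that within each quadrant the last row of $\widetilde C$ is classified by ``which is the first nonzero entry, or are they all zero,'' which is again exhaustive; then one checks that the Jordan form in each leaf is correctly computed by exhibiting the chain $\gamma_j$, whose validity certifies the claimed type. Once that is in hand, the theorem follows by taking the union of the resulting partitions of $2k+1$.
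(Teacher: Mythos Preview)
Your proposal is correct and mirrors the paper's approach exactly: the theorem is stated there without a separate proof, as an immediate summary of the exhaustive case analysis (Cases~1--4 on $\widetilde{S}$) that precedes it. Your only slip is in the bookkeeping for subcase~4a, which yields the two-block form $J_{2k}(\lambda)\oplus J_1(\lambda)$ and therefore belongs to the second family $J_{2k-i+1}(\lambda)\oplus J_i(\lambda)$ with $i=1$, not the three-block family; the third family is populated by 4b ($1\le i\le k-1$) and 4c ($i=k$) alone.
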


Now, we have all  possible eigenstructure types of the matrix $S$.
In view of such classifications above, it turns out that the  geometric multiplicity of $\widehat{A}$, defined in~\eqref{eq:widehatodd},  is at most three.

\section{Conclusions and Open Problems}
The methods developed in this paper are used to shift an eigenvalue with algebraic multiplicities greater than $1$, in the sense that the first half of corresponding generalized eigenvectors are kept unchanged. From the point of view of applications, the approach has the advantage that one can apply this shift technique to speed up or stabilizing a given numerical algorithm.

It is true that there are many different kinds of eigenvalue shift problems for a wide range of applications in science and engineering.
For example, in our recent work~\cite{Lin20135083},
we apply the shift approach to remove two zero eigenvalues
embedded in a nonsymmetric algebraic Riccati equation. After the shift, the speed of convergence of the simple iteration algorithm for finding the minimal nonnegative solution is significantly improved.  Indeed, this application is a special case corresponding to Theorem~\ref{lem:shiftA} with $k=1$.

That is, in this work, we propose a more general way to shift eigenvalues of a given matrix with multiple algebraic multiplicities. But, it is worthy of note that the computation of the Jordan form is extremely sensitive to perturbations. It appears to be an interesting open and challenge problem to propose a reliably numerical method to compute the Jordan form of a given matrix with floating-point arithmetic.
On the other hand, in most cases we are required to
shift partial (not all) eigenvalues of a given Jordan block,
change multiple eigenvalues simultaneously, or  replace complex conjugate eigenvalues of a real matrix.
All these questions are under investigation and will be reported elsewhere.

%


\section*{Acknowledgement}
%
 This research work is partially supported by the National Science Council and
the National Center for Theoretical Sciences in Taiwan.

%

\end{document}